\newtheorem{thm}{Theorem}
\newtheorem{lem}[thm]{Lemma}
\newtheorem{cor}[thm]{Corollary}
\newtheorem*{thm*}{Theorem}
\newcounter{saveenum}
\theoremstyle{definition}
\newtheorem*{defn*}{Definition}
\newtheorem*{cor*}{Corollary}
\DeclareMathOperator{\lin}{lin}
\DeclareMathOperator{\vol}{vol}
\newcommand{\ic}{\alpha}
\DeclareMathOperator{\codim}{codim}
\DeclareMathOperator{\proj}{Proj}
\newcommand{\N}{\mathbb N}
\newcommand{\R}{\mathbb R}
\newcommand{\LL}{\mathcal L}
\newcommand{\PP}{\mathbb P}
\newcommand{\mult}{m} 
\newcommand{\D}{\mathcal{M}} 
\newcommand{\mc}[1]{\mathcal{#1}} 
\newcommand{\del}{\delta}
\newcommand{\ep}{\epsilon}
\newcommand{\sig}{\sigma}
\newcommand{\ka}{\kappa}
\newcommand{\lam}{\lambda} 
\newcommand{\Om}{\Omega}
\newcommand{\om}{\omega}
\newcommand{\paeom}{\PP\text{-a.e. } \om \in \Om}
\begin{document}

\title[Multiplicative ergodic theorem on Banach spaces]{A concise proof of the 
multiplicative ergodic theorem on Banach spaces}
\author{Cecilia Gonz\'alez-Tokman and Anthony Quas}
\address[Gonz\'alez-Tokman]{School of Mathematics and Physics, The University of 
Queensland,  St Lucia, QLD, 4072, Australia} 
\address[Quas]{Department of Mathematics and
Statistics, University of Victoria, Victoria, BC, CANADA, V8W 3R4}

\begin{abstract}
We give a new proof of a multiplicative ergodic theorem for quasi-compact
operators on Banach spaces with a separable dual. Our proof works by constructing
the finite-codimensional `slow' subspaces (the subspaces where the growth 
rate is dominated by some 
$\lambda_i$), in contrast with earlier infinite-dimensional multiplicative ergodic
theorems which work by constructing the finite-dimensional fast subspaces. 
As an important consequence for applications, we are able to get rid of
the injectivity requirements that appear in earlier works.
\end{abstract}

\maketitle

\section{Introduction}

The multiplicative ergodic theorem (MET) is a very powerful result 
in ergodic theory establishing the existence of \textsl{generalized eigenspaces} 
for stationary compositions of linear operators. It is of great 
interest in many areas of mathematics, including
analysis, geometry and applications. The MET was first established by 
Oseledets \cite{Oseledets} in the context of matrix cocycles.
The decomposition into generalized eigenspaces is called the 
\textsl{Oseledets splitting}.

After the original version, the MET was proved by a different
method by Raghunathan \cite{Raghunathan}.
The result was subsequently generalized to compact operators on
Hilbert spaces by Ruelle \cite{Ruelle}.
Ma\~n\'e \cite{Mane} proved a version for compact operators on Banach spaces 
under some continuity assumptions on the base dynamics and the dependence of 
the operator on the base
point. Thieullen \cite{Thieullen} extended this to quasi-compact operators.
Recently, Lian and Lu \cite{LianLu} proved a version in the context of 
linear operators on separable Banach spaces, in which the continuity 
assumption was relaxed to a measurability condition.

We prove a non-invertible Oseledets theorem (i.e. we obtain a filtration)
for a random dynamical system (the full definition is below)
acting on a Banach space with separable dual. We do not make any
assumption about injectivity of the operators, unlike most previous
Banach-space valued versions of the Multiplicative Ergodic Theorem. 
We also prove a semi-invertible Oseledets theorem (i.e. we obtain a
splitting) under the assumption that the underlying Banach space
is separable and reflexive. 

An important feature of the present approach is its constructive nature.
Indeed, it provides a robust way of approximating the Oseledets splitting, 
following what could be considered a \textsl{power method} type strategy.
This makes the work also relevant from an applications perspective.

The approach of this work is similar in spirit to that of Raghunathan,
in that we primarily work with the `slow Oseledets spaces'.
Ma\~n\'e's proof works hard to build the fast space, as
do the subsequent works based on Ma\~n\'e's template. 
These proofs rely on injectivity of the operators; some of them make use of natural extensions 
to extend the result to non-invertible operators -- this was the strategy in
\cite{Thieullen}, and it was also used by Doan in \cite{Doan} to extend
\cite{LianLu} to the non-invertible context.
In contrast, we establish the non-invertible version first and recover the 
(semi-)invertible one, including the `fast spaces', straightforwardly using duality. 
Another key simplifying feature of our method
is that we prove measurability at the end of the proof, rather than working to ensure that
all intermediate constructions are measurable.

While Raghunathan's proof uses singular value decomposition and hence relies 
on the notion of orthogonality,
we study instead collections of vectors with maximal volume growth.
Another important difference with Raghunathan's approach is that instead of 
dealing with the 
exterior algebra, we work with the Grassmannian. We claim this is
more natural since subspaces correspond  to \emph{rank one} elements of the 
exterior algebra (those that can be expressed as $v_1\wedge\ldots\wedge v_k$).
In the Euclidean setting, rank one elements naturally appear as eigenvectors of
$\Lambda^k(A^*A)$, but this does not seem to generalize to the Banach space case.
%
%
%
 
Section~\ref{sec:volume} analyses notions of volume growth for bounded linear maps 
$T$ on a Banach space $X$.
We establish an asymptotic equivalence between $k$-dimensional volume growth under
$T$ and $T^*$, as well as other measures of volume growth and Section \ref{sec:rds} 
uses these results to obtain the multiplicative ergodic theorems. 
The main results in this article are Theorem \ref{thm:Osel}
and Corollary \ref{cor:splitting}. After submitting the current article, we 
learned of an independent proof of essentially the same result via closely related
methods due to Blumenthal \cite{Blumenthal}.

\section{Volume calculations in Banach spaces}\label{sec:volume}

Let $X$ be a 
Banach space with norm $\|\cdot\|$. As usual, given a
non-empty subset $A$ of $X$ and a point $x\in X$, we define
$d(x,A)=\inf_{y\in A}d(x,y)$.  We denote by $B_X$ and $S_X$ the unit
ball and unit sphere in $X$, respectively.  The linear span of a
finite collection $C$ of vectors in $X$ will be denoted by $\lin(C)$ with the
convention that $\lin(\emptyset)=\{0\}$. The dual of $X$ will be
denoted by $X^*$. In this section, we study the relationships 
between various notions of volume and singular value for maps of Banach spaces.
Other closely related notions are due to Gelfand and Kolmogorov and are
described in Pisier's book \cite{Pisier}. For the purposes of 
later sections, it will suffice to show that two quantities agree up to 
a bounded multiplicative factor. We make no attempt to optimize the bounds. 
We use the notation $Q\asymp Q'$ if the ratio of the quantities $Q$ and $Q'$ is
bounded above and below by constants independent of the Banach space(s).

We define the $k$-\textsl{dimensional volume} of a collection, $(v_1,\ldots,v_k)$,
of vectors in a Banach space by
\begin{equation*}
  \vol_k(v_1,\ldots,v_k)=\prod_{i=1}^k d(v_i,\lin(\{v_j\colon j<i\})).
\end{equation*}

It is easy to see that $\vol_k(\alpha_1v_1,\ldots \alpha_k
v_k)=|\alpha_1|\ldots|\alpha_k|\vol_k(v_1,\ldots,v_k)$. In the case
where the normed space is Euclidean this notion corresponds with the
standard notion of $k$-dimensional volume.  Notice that
$\vol_k(v_1,\ldots,v_k)$ is not generally invariant under permutation
of the vectors.

Given a bounded linear map $T$ from $X$ to $Y$, we define
$d_kT(v_1,\ldots,v_k)$ to be $\vol_k(Tv_1,\ldots,Tv_k)$ and
$D_kT=\sup_{\|v_1\|=1,\ldots,\|v_k\|=1}d_kT(v_1,\ldots v_k)$.

\begin{lem}[Submultiplicativity]\label{lem:submult}
  Let $T\colon X\to Y$ and $S\colon Y\to Z$ be linear maps. Then
  $D_k(S\circ T)\le D_k(S)D_k(T)$.
\end{lem}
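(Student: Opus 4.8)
The plan is to fix unit vectors $v_1,\ldots,v_k\in X$, bound $\vol_k\big((S\circ T)v_1,\ldots,(S\circ T)v_k\big)$ by $D_k(S)D_k(T)$, and then take the supremum over all such tuples, since $D_k(S\circ T)$ is by definition this supremum. Write $w_i=Tv_i$. The obvious estimate $d\big(Sw_i,\lin\{Sw_j:j<i\}\big)\le\|S\|\,d\big(w_i,\lin\{w_j:j<i\}\big)$ only gives the weaker bound $\|S\|^k D_k(T)$; the real point is to preserve the full $k$-dimensional structure of $S$ by applying it to a tuple of \emph{unit} vectors, which I arrange via a Gram--Schmidt-type reduction of the $w_i$.

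Concretely, for each $i$ set $\delta_i=d\big(w_i,\lin\{w_j:j<i\}\big)$ and choose $z_i\in\lin\{w_j:j<i\}$ with $\|w_i-z_i\|=\delta_i$ (the infimum is attained since the span is finite-dimensional; alternatively one may take near-minimizers and pass to a limit). If some $\delta_i=0$, then $w_i$, hence also $Sw_i$, lies in the span of the earlier vectors, so both $\vol_k(Tv_1,\ldots,Tv_k)$ and $\vol_k\big((S\circ T)v_1,\ldots,(S\circ T)v_k\big)$ vanish and there is nothing to prove. So assume all $\delta_i>0$ and put $u_i=(w_i-z_i)/\delta_i$, a unit vector.

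The key step is the identity $\vol_k(Sw_1,\ldots,Sw_k)=\big(\prod_{i=1}^k\delta_i\big)\,\vol_k(Su_1,\ldots,Su_k)$. It rests on two observations: an easy induction on $l$ gives $\lin\{u_1,\ldots,u_l\}=\lin\{w_1,\ldots,w_l\}$, hence $\lin\{Su_1,\ldots,Su_l\}=\lin\{Sw_1,\ldots,Sw_l\}$, for every $l$; and, writing $Sw_i=Sz_i+\delta_i Su_i$ with $Sz_i\in\lin\{Sw_j:j<i\}=\lin\{Su_j:j<i\}$, one gets $d\big(Sw_i,\lin\{Sw_j:j<i\}\big)=\delta_i\,d\big(Su_i,\lin\{Su_j:j<i\}\big)$, since translating by an element of a subspace does not change the distance to it and that distance scales with the positive scalar $\delta_i$. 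Multiplying over $i$ yields the identity. (Taking $S$ to be the identity also recovers $\prod_i\delta_i=\vol_k(w_1,\ldots,w_k)$, which is in any case immediate from the definitions.)

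To finish: $\prod_{i=1}^k\delta_i=\vol_k(w_1,\ldots,w_k)=\vol_k(Tv_1,\ldots,Tv_k)\le D_k(T)$, while the $u_i$ are unit vectors so that $\vol_k(Su_1,\ldots,Su_k)\le D_k(S)$; hence $\vol_k\big((S\circ T)v_1,\ldots,(S\circ T)v_k\big)=\vol_k(Sw_1,\ldots,Sw_k)\le D_k(S)D_k(T)$, and taking the supremum over unit $v_1,\ldots,v_k$ gives the lemma. I expect the only mildly delicate point to be the volume identity of the third paragraph — specifically, keeping the spans of the $Su_j$ and the $Sw_j$ aligned — together with the conceptual observation that one must normalize the $w_i$ before applying $S$ rather than extracting an operator norm.
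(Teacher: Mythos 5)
Your proof is correct and follows essentially the same route as the paper's: both perform a Gram--Schmidt-type reduction on the images $Tv_i$, normalize the resulting difference vectors to obtain unit vectors $u_i$, and then factor $\vol_k\big((S\circ T)v_1,\ldots,(S\circ T)v_k\big)$ as $\vol_k(Tv_1,\ldots,Tv_k)\cdot\vol_k(Su_1,\ldots,Su_k)$. The paper reaches the same identity a bit more compactly by first noting $\vol_k$ is unchanged under adding linear combinations of earlier vectors and then extracting the scalars $\|T(w_j)\|$, whereas you verify the factorization directly by tracking the spans; the substance is the same.
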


\begin{proof}
  Let $T(v_1),\ldots,T(v_k)\in X$ be linearly independent. 
  Then one checks from the definition that
  for any collection of coefficients $(\alpha_{ij})_{j<i}$, the
  following holds
  \begin{equation}\label{eq:changeByLinComb}
    \vol_k(v_1,\ldots,v_k)=\vol_k(v_1,v_2-\alpha_{21}v_1,\ldots,v_k-\sum_{j<k}\alpha_{kj}v_j). 
  \end{equation}

  Since the linear spans in the definition of volume are
  finite-dimensional spaces, the minima are attained so that $d_k
  T(v_1,\ldots,v_k)=\|T(v_1)\|\|T(v_2)-\alpha_{21}T(v_1)\|\ldots
  \|T(v_k)-\alpha_{k1}T(v_1)-\ldots-\alpha_{k,{k-1}}T(v_{k-1})\|$ for
  appropriate choices of $(\alpha_{ij})_{j<i}$. 

  Let $w_j=v_j-\sum_{i<j}\alpha_{ji}v_i$ so that
  $d_kT(v_1,\ldots,v_k)=\|T(w_1)\|\ldots \|T(w_k)\|$ and set
  $u_j=T(w_j)/\|T(w_j)\|$. Using \eqref{eq:changeByLinComb}, we have
  \begin{align*}
    d_k(S\circ T)(v_1,\ldots,v_k)&=\vol_k(ST(v_1),\ldots,ST(v_k))\\
    &=
    \vol_k(ST(w_1),\ldots,ST(w_k))\\
    &=\|T(w_1)\|\ldots\|T(w_k)\|\vol_k(S(u_1),\ldots,S(u_k)) \\
    &\le d_k T(v_1,\ldots,v_k)D_k S.
  \end{align*}
  Taking a supremum over $v_1,\ldots,v_k$ in the unit ball of $X$, one
  obtains the bound $D_k(S\circ T)\le D_k(S)D_k(T)$ as required.
\end{proof}

\begin{lem}\label{lem:unifexpDk}
Let $T\colon X\to Y$ be linear. Suppose that $V$ is a $k$-dimensional
subspace and $\|Tx\|\ge M\|x\|$ for all $x\in V$. Then $D_kT\ge M^k$.
\end{lem}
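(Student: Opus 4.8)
The plan is to exhibit unit vectors $v_1,\ldots,v_k\in V$ for which $\vol_k(Tv_1,\ldots,Tv_k)\ge M^k$; since these are in particular unit vectors of $X$, this yields $D_kT\ge d_kT(v_1,\ldots,v_k)\ge M^k$ directly from the definition of $D_k$. The case $M=0$ is trivial because volumes are non-negative, so I would assume $M>0$, and note that the hypothesis $\|Tx\|\ge M\|x\|$ then forces $T$ to be injective on $V$.

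First I would build a well-conditioned basis of $V$ greedily, in the spirit of an Auerbach basis but without needing any compactness argument. Start from any unit vector $v_1\in V$. Inductively, suppose $v_1,\ldots,v_{i-1}$ have been chosen with $W_{i-1}:=\lin(v_1,\ldots,v_{i-1})$ of dimension $i-1$ and $d(v_j,\lin(v_1,\ldots,v_{j-1}))=1$ for each $j<i$. Since $i-1<\dim V$, pick any $w\in V\setminus W_{i-1}$; as $W_{i-1}$ is finite-dimensional, $c:=d(w,W_{i-1})>0$ is attained at $w-p$ for some $p\in W_{i-1}$, so $\|w-p\|=c$. Set $v_i=(w-p)/c$, so that $\|v_i\|=1$. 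Translating by an element of $W_{i-1}$ does not change the distance to $W_{i-1}$, hence $d(v_i,W_{i-1})=d(w-p,W_{i-1})/c=d(w,W_{i-1})/c=1$; in particular $v_i\notin W_{i-1}$, so $\dim W_i=i$ and the induction continues. After $k$ steps I obtain unit vectors in $V$ with $\vol_k(v_1,\ldots,v_k)=\prod_{i=1}^k d(v_i,W_{i-1})=1$.

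Next I would push the lower expansion bound forward to the images. Because $T$ is injective on $V$, $\lin(Tv_j\colon j<i)=T(W_{i-1})$ has dimension $i-1$, so the infimum defining $d(Tv_i,\lin(Tv_j\colon j<i))$ is attained at $Tv_i-Tq_i$ for some $q_i\in W_{i-1}$. Then, using $\|Tx\|\ge M\|x\|$ on $V$,
\[
d\bigl(Tv_i,\lin(Tv_j\colon j<i)\bigr)=\|T(v_i-q_i)\|\ge M\|v_i-q_i\|\ge M\,d(v_i,W_{i-1}).
\]
Taking the product over $i=1,\ldots,k$ gives $\vol_k(Tv_1,\ldots,Tv_k)\ge M^k\vol_k(v_1,\ldots,v_k)=M^k$, and hence $D_kT\ge M^k$ as claimed.

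The one point that requires genuine care is the greedy construction of the $v_i$'s — specifically the observation that the distance of a suitable vector to $W_{i-1}$ can always be realized by an actual unit vector of $V$, which is exactly the content of the Auerbach-basis lemma. Everything after that is a short computation, resting on the pointwise lower bound for $\|T\cdot\|$ on $V$ and on the identity $\lin(Tv_j\colon j<i)=T(W_{i-1})$, which is where injectivity of $T|_V$ (equivalently $M>0$) is used.
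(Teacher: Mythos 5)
Your proof is correct and follows essentially the same approach as the paper, which simply states the existence of unit vectors $v_1,\ldots,v_k\in V$ with $d(v_j,\lin(\{v_i\colon i<j\}))=1$ and then asserts $d_kT(v_1,\ldots,v_k)\ge M^k$; you have supplied the greedy (Auerbach-type) construction of such vectors and the short computation showing $d(Tv_i,\lin(Tv_j\colon j<i))\ge M$ for each $i$, which the paper leaves implicit.
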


\begin{proof}
Let $v_1,\ldots,v_k$ belong to $V\cap S_X$ and satisfy 
$d(v_j,\lin(\{v_i\colon i<j\}))=1$.
Then $d_kT(v_1,\ldots,v_k)\ge M^k$.
\end{proof}

We now proceed to compare volume estimates for a linear operator $T\colon X\to Y$
and its dual $T^*\colon Y^*\to X^*$.  We introduce a third quantity to 
which we compare both
$D_k(T)$ and $D_k(T^*)$.  Given linear functionals
$\theta_1,\ldots,\theta_k \in Y^*$ and points $x_1,\ldots,x_k \in X$,
we let $U((\theta_i),(x_j))$ be the matrix with entries
$U_{ij}=\theta_i(T(x_j))$ and define
$$
E_k(T)=\sup \left\{\det U\big((\theta_i),(x_j)\big)\colon
  \|\theta_i\|=1\text{ and }\|x_j\|=1\text{ for all }i,j\right\}.
$$

\begin{lem}[Relationship between volumes for $T$ and $T^*$]\label{lem:primaldual}
  For all $k>0$, there exist positive constants $c_k$ and $C_k$ with
  the following property: For every bounded linear map $T$ from a
  Banach space $X$ to itself,
$$
c_kD_k(T)\le D_k(T^*)\le C_kD_k(T).
$$
\end{lem}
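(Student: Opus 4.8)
The plan is to compare both $D_k(T)$ and $D_k(T^*)$ with the auxiliary quantity $E_k$, and then to show $E_k(T)=E_k(T^*)$; no other ingredient is needed. Throughout we use that $E_k(T)$ is unchanged if the norm-one constraints in its definition are relaxed to ``norm at most one'', because $\det$ is multilinear in the rows and in the columns. The first claim is that
\[
  D_k(T)\le E_k(T)\le k^{k/2}D_k(T)
\]
for every bounded operator $T$ on every Banach space $X$. For the left-hand inequality, fix unit vectors $v_1,\ldots,v_k$ and use Hahn--Banach to choose $\theta_i\in X^*$ of norm at most one vanishing on $\lin\{Tv_j\colon j<i\}$ with $\theta_i(Tv_i)=d(Tv_i,\lin\{Tv_j\colon j<i\})$; then $U=(\theta_i(Tv_j))$ is upper triangular, so $\det U=\vol_k(Tv_1,\ldots,Tv_k)$, and a supremum over the $v_j$ gives $D_k(T)\le E_k(T)$. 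For the right-hand inequality, fix unit $\theta_i$ and unit $x_j$, put $w_j=Tx_j$, and use \eqref{eq:changeByLinComb} to replace each $w_j$ by $y_j=w_j-\sum_{l<j}\alpha_{jl}w_l$, choosing $\alpha_{jl}$ so that $\|y_j\|=d(w_j,\lin\{w_l\colon l<j\})$ and hence $\prod_j\|y_j\|=\vol_k(w_1,\ldots,w_k)$; the corresponding column operations do not change $\det(\theta_i(w_j))$, so Hadamard's inequality applied to the columns of $(\theta_i(y_j))$, together with $|\theta_i(y_j)|\le\|y_j\|$, yields $\det(\theta_i(w_j))\le k^{k/2}\prod_j\|y_j\|=k^{k/2}\vol_k(Tx_1,\ldots,Tx_k)\le k^{k/2}D_k(T)$. (When the $w_j$ are linearly dependent everything in sight vanishes.)

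Applying the displayed inequality to $(X^*,T^*)$ gives $D_k(T^*)\le E_k(T^*)\le k^{k/2}D_k(T^*)$, so it remains to prove $E_k(T^*)=E_k(T)$. Let $J\colon X\to X^{**}$ be the canonical embedding; since $(Jx)(T^*\phi)=\phi(Tx)$, restricting the functionals $\xi_i\in X^{**}$ in the definition of $E_k(T^*)$ to lie in $J(X)$ yields precisely the supremum defining $E_k(T)$ (after transposing the matrix, which leaves the determinant unchanged), whence $E_k(T)\le E_k(T^*)$. For the reverse inequality, fix $\phi_1,\ldots,\phi_k\in X^*$ and observe that $(\xi_1,\ldots,\xi_k)\mapsto\det(\xi_i(T^*\phi_j))$ is weak-$*$ continuous on $(X^{**})^k$; since Goldstine's theorem makes $J(B_X)^k$ weak-$*$ dense in $(B_{X^{**}})^k$, the supremum over $\xi_i\in B_{X^{**}}$ equals the supremum over $\xi_i\in J(B_X)$, which is $E_k(T)$. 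Hence $E_k(T^*)=E_k(T)$, and combining the two displayed inequalities gives the conclusion with $c_k=k^{-k/2}$ and $C_k=k^{k/2}$.

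The step I expect to be the main obstacle is the identity $E_k(T^*)=E_k(T)$: the inequality $E_k(T)\le E_k(T^*)$ is immediate, but the reverse requires approximating an arbitrary functional on $X^*$ by elements of $X$, which is exactly what Goldstine's theorem supplies once one notes that the determinant is a weak-$*$ continuous function of its rows. Everything else reduces to bookkeeping with \eqref{eq:changeByLinComb} and Hadamard's inequality.
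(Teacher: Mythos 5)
Your proof is correct, and its second half takes a genuinely different route from the paper's. For the primal chain $D_k(T)\le E_k(T)\le CD_k(T)$, you follow essentially the same line as the paper: Hahn--Banach supplies functionals making the matrix triangular for the left inequality, and column operations reduce to a bound on the entries for the right inequality; the only variation is that you invoke Hadamard's inequality instead of the paper's cruder estimate $|\det U'|\le k!\prod_j\alpha_j$, improving the constant from $k!$ to $k^{k/2}$. The genuinely new idea is in the dual comparison. The paper proves $D_k(T^*)\le E_k(T)\le k!\,D_k(T^*)$ directly, with the lower bound requiring a somewhat fiddly inductive choice of the vectors $x_j$ (adjusting the row functionals to vanish on the previously chosen $x$'s, then picking the next $x$ to nearly attain the norm of the adjusted functional). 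You instead apply the already-proved primal chain to the pair $(X^*,T^*)$ and then close the loop by establishing the exact identity $E_k(T^*)=E_k(T)$: restricting the outer functionals in $E_k(T^*)$ to $J(X)$ simply transposes the matrix and gives $E_k(T)\le E_k(T^*)$, and Goldstine's theorem together with the weak-$*$ continuity of $(\xi_1,\dots,\xi_k)\mapsto\det(\xi_i(T^*\phi_j))$ gives the reverse inequality, since the supremum of a continuous function over a weak-$*$ dense subset of $B_{X^{**}}^k$ equals the supremum over all of it. This is slicker and more conceptual: it exhibits $E_k$ as a genuinely self-dual quantity, whereas the paper's inequalities only show $E_k(T)$ and $E_k(T^*)$ agree up to a factor of $k!$. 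Both approaches rest ultimately on Hahn--Banach (Goldstine being a corollary of it), so the tools are of comparable strength; you trade the paper's inductive construction for a one-shot density argument and, as a bonus, obtain the better constants $c_k=k^{-k/2}$, $C_k=k^{k/2}$.
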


\begin{proof}
  The statement will follow from the following inequalities:
  \begin{align}
    D_k(T)&\le E_k(T)\le k!D_k(T)\label{eq:primal}\\
    D_k(T^*)&\le E_k(T)\le k!D_k(T^*).\label{eq:dual}
  \end{align}

  The second inequality of \eqref{eq:primal} is proved as follows: Let
  $x_1,\ldots,x_k$ and $\theta_1,\ldots,\theta_k$ all be of norm 1 in
  $X$ and $Y^*$ respectively. Let
  $\alpha_j=d(Tx_j,\text{lin}(Tx_1,\ldots,Tx_{j-1}))$. Let
  $c^j_1,\ldots ,c^j_{j-1}$ be chosen so that $\|Tz_j\|=\alpha_j$,
  where $z_j$ is defined by
  $z_j=x_j-(c^j_1x_1+\ldots+c^j_{j-1}x_{j-1})$. Note that
  $U'=U((\theta_i),(z_j))$ may be obtained from
  $U=U((\theta_i),(x_j))$ by column operations that leave the
  determinant unchanged. Notice also that $|U'_{ij}|=|\theta_i(Tz_j)|
  \le \alpha_j$. From the definition of a determinant, we see that
  $\det U=\det U'\le k!\alpha_1\ldots \alpha_k$. This inequality holds
  for all choices of $\theta_i$ in the unit sphere of $Y^*$. Now,
  maximizing over choices of $x_j$ in the unit sphere of $X$, we
  obtain the desired result.

  The second inequality of \eqref{eq:dual} may be obtained
  analogously.  We let
  $\beta_i=d(T^*\theta_i,\text{lin}(T^*\theta_1,\ldots,T^*\theta_{i-1}))$
  and choose linear combinations $\phi_i$ of the $\theta_i$ for which
  the minimum is obtained. The matrix $U''=U((\phi_i),(x_j))$ is
  obtained by row operations from $U$ and the
  $|U''_{i,j}|=|\phi_i(Tx_j)|=|(T^*\phi_i)(x_j)|\le \beta_i$.

  To show the first inequality of \eqref{eq:primal}, fix
  $x_1,\ldots,x_k$ of norm 1.  As before, let
  $\alpha_j=d(Tx_j,\text{lin}(Tx_1,\ldots,Tx_{j-1}))$. By the
  Hahn-Banach theorem, there exist linear functionals
  $(\theta_i)_{i=1}^k$ in $S_{Y^*}$ such that $\theta_i(Tx_i)=\alpha_i$
  and $\theta_i(x_k)=0$ for all $k<i$. Now
$$
\det U((\theta_i),(x_j))=\prod \alpha_i.
$$
Maximizing over the choice of $(x_j)$, we obtain $E_k(T)\ge D_k(T)$ as
required.

Finally, for the first inequality of \eqref{eq:dual}, we argue as
follows.  Let $\epsilon>0$ be arbitrary and let
$\theta_1,\ldots,\theta_k$ belong to the unit sphere of $Y^*$. We may
assume that $T^*\theta_1,\ldots,T^*\theta_k$ are linearly independent --
otherwise the inequality is trivial. Let $\phi_i=T^*\theta_i-\sum_{k<i}a_{ik}T^*\theta_k$
be such that $\|\phi_i\|=d(T^*\theta_i,\lin(\{T^*\theta_k\colon k<i\}))$. 
We shall pick $x_1,\ldots,x_k$ inductively in such a way that 
$|\det((\phi_i(x_j))_{i,j\le l})|$
is at least $\prod_{i=1}^l(\|\phi_i\|-\epsilon)$ for each $1\le l\le k$. 
Suppose $x_1,\ldots,x_{l-1}$ have been chosen.
Then since $\det((\phi_i(x_j))_{i,j<l})$ is non-zero, the rows span $\R^{l-1}$. 
Hence there exist $(b_i)_{i<l}$ such that $\psi_l:=\phi_l+\sum_{i<l}b_i\phi_i$
satisfies $\psi_l(x_j)=0$ for all $j<l$. By assumption, $\|\psi_l\|\ge \|\phi_l\|$.
Pick $x_l\in S_X$ such that $\psi_l(x_l)>\|\psi_l\|-\epsilon$. Then the matrix with 
a row for $\psi_l$ and a column for $x_l$ adjoined has determinant of absolute value
at
least $\prod_{i=1}^l(\|\phi_i\|-\epsilon)$. The matrix with $\phi_l$
replacing $\psi_l$ has the same determinant, completing the induction. Maximizing over 
the choice of $(\theta_j)_{j\le k}$, letting $\epsilon$ shrink to 0, and observing that
$\det((\phi_i(x_j))_{i,j\le k})=\det((T^*\theta_i(x_j))_{i,j\le k})$ completes the proof.

\end{proof}

A fourth quantity that will play a crucial role in what follows is
$F_k(T)$, defined as
$$
F_k(T)=\sup_{\dim(V)=k}\inf_{v\in V\cap S_X}\|Tv\|.
$$

We make use of the following lemma due to Gohberg and Krein whose
proof may be found in Kato's book \cite{Kato} (Chapter 4, Lemma 2.3).
\begin{lem}[Gohberg and Krein]\label{lem:GK}
Let $V_1$ be a proper finite-dimensional subspace of a subspace $V_2$ of a 
Banach space, $X$. Then there exists $v\in V_2\setminus\{0\}$ such that 
$d(v,V_1)=\|v\|$. 
\end{lem}

\begin{lem}[Relation between determinants and $F_k$]\label{lem:Frec}
  Let $T$ be a bounded linear map from a Banach space $X$ to
  a Banach space $Y$. Then
$$
E_{k-1}(T)F_k(T)\le E_k(T) \le k2^{k-1}E_{k-1}(T)F_k(T).
$$
\end{lem}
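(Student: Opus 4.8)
The plan is to prove the two inequalities separately, in each case relating a $k\times k$ determinant of the kind defining $E_k(T)$ to a $(k-1)\times(k-1)$ one controlled by $E_{k-1}(T)$, with the missing factor accounted for by $F_k(T)$. For the lower bound I will extend a near-optimal $(k-1)$-configuration for $E_{k-1}(T)$ by one functional and one point, the point chosen inside a subspace nearly realizing $F_k(T)$ and so that the new column is supported only in its last entry, making the matrix block triangular. For the upper bound I will start from a near-optimal $k$-configuration for $E_k(T)$, find a unit vector in the span of its points on which $T$ is roughly as small as $F_k(T)$, use it to replace a column by one with small entries, and expand along that column.

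For the lower bound, fix $\epsilon>0$; we may assume $E_{k-1}(T)>0$ and $F_k(T)>0$ (otherwise there is nothing to prove) and $\epsilon$ small. Pick $\theta_1,\dots,\theta_{k-1}\in S_{X^*}$ and $x_1,\dots,x_{k-1}\in S_X$ with $\det\big((\theta_i(Tx_j))_{i,j\le k-1}\big)\ge E_{k-1}(T)-\epsilon$, and a $k$-dimensional subspace $V\subseteq X$ with $\inf_{v\in V\cap S_X}\|Tv\|\ge F_k(T)-\epsilon$. The linear map $V\to\R^{k-1}$, $x\mapsto\big((T^*\theta_i)(x)\big)_{i<k}$, has nontrivial kernel, so there is $x_k\in V\cap S_X$ with $\theta_i(Tx_k)=0$ for all $i<k$; then $\|Tx_k\|\ge F_k(T)-\epsilon$, and by Hahn--Banach there is $\theta_k\in S_{X^*}$ with $\theta_k(Tx_k)=\|Tx_k\|$. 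Since the first $k-1$ entries of the last column of $\big(\theta_i(Tx_j)\big)_{i,j\le k}$ vanish, expansion along that column gives
\[
  E_k(T)\ \ge\ \det\big((\theta_i(Tx_j))_{i,j\le k}\big)\ =\ \theta_k(Tx_k)\,\det\big((\theta_i(Tx_j))_{i,j<k}\big)\ \ge\ (F_k(T)-\epsilon)(E_{k-1}(T)-\epsilon),
\]
and letting $\epsilon\to 0$ yields $E_{k-1}(T)F_k(T)\le E_k(T)$.

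For the upper bound, fix $\epsilon>0$ and pick $\theta_1,\dots,\theta_k\in S_{X^*}$ and $x_1,\dots,x_k\in S_X$ with $\det\big((\theta_i(Tx_j))_{i,j\le k}\big)\ge E_k(T)-\epsilon$. We may assume $V:=\lin(x_1,\dots,x_k)$ has dimension $k$, since otherwise the columns of the matrix are dependent and the determinant is $0$. As $\dim V=k$, the subspace $V$ is admissible in the supremum defining $F_k(T)$, so there is $v^*\in V\cap S_X$ with $\|Tv^*\|\le F_k(T)+\epsilon$. Writing $v^*=\sum_j c_j x_j$, from $1=\|v^*\|\le\sum_j|c_j|\le k\max_j|c_j|$ we obtain $|c_{j_0}|\ge 1/k$ for some index $j_0$. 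Replacing the $j_0$-th column of $\big(\theta_i(Tx_j)\big)_{i,j\le k}$, whose entries are $\theta_i(Tx_{j_0})$, by the column with entries $\theta_i(Tv^*)=\sum_j c_j\,\theta_i(Tx_j)$ multiplies the determinant by $c_{j_0}$ (multilinearity in that column; the terms with $j\ne j_0$ repeat a column and vanish). Expanding this modified determinant along column $j_0$: each entry has $|\theta_i(Tv^*)|\le\|Tv^*\|\le F_k(T)+\epsilon$, while each cofactor is, up to sign, a $(k-1)\times(k-1)$ minor of the form $\det U$ built from $k-1$ unit functionals and $k-1$ unit points, hence of modulus at most $E_{k-1}(T)$. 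Therefore
\[
  E_k(T)-\epsilon\ \le\ \frac{1}{|c_{j_0}|}\,\big|\det(\text{modified matrix})\big|\ \le\ k\cdot k\,(F_k(T)+\epsilon)\,E_{k-1}(T),
\]
and letting $\epsilon\to 0$ gives $E_k(T)\le k^2 E_{k-1}(T)F_k(T)\le k2^{k-1}E_{k-1}(T)F_k(T)$, since $k\le 2^{k-1}$ for every $k\ge 1$.

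The only mildly delicate point, and the source of one of the two factors of $k$, is passing from the contracted unit vector $v^*\in V$ to a usable column: the basis $x_1,\dots,x_k$ of $V$ need not be close to orthonormal, so a priori the coefficients $c_j$ could be arbitrarily large, but the elementary bound $\max_j|c_j|\ge 1/k$ forced by $\|v^*\|=\|x_j\|=1$ is exactly what is needed. Everything else is multilinear algebra together with two appeals to Hahn--Banach, so I do not expect a genuine obstacle; in particular the constant $k2^{k-1}$ in the statement is not sharp (the argument above gives $k^2$), but this does not matter for the applications.
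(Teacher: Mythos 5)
Your proof is correct, and in both directions it takes a genuinely different and arguably cleaner route than the paper's.

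For the lower bound $E_{k-1}(T)F_k(T)\le E_k(T)$, the paper picks $z$ in the unit sphere of $T(V)$ with $d(z,W)=1$ (where $W=\lin(Tx_1,\ldots,Tx_{k-1})$), invoking a theorem of Gohberg and Krein, and then builds $\theta_k$ of norm $1$ annihilating $W$ with $\theta_k(z)=1$, so the last \emph{row} of the $k\times k$ matrix is $(0,\ldots,0,\|Tv\|)$. You instead use a dimension count: the linear map $V\to\R^{k-1}$, $x\mapsto(\theta_i(Tx))_{i<k}$, has a nonzero kernel vector $x_k$, and Hahn--Banach gives $\theta_k\in S_{X^*}$ with $\theta_k(Tx_k)=\|Tx_k\|$; this makes the last \emph{column} the one with a single nonzero entry. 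The two block-triangular structures are dual to each other, but your route entirely bypasses the Gohberg--Krein ingredient, replacing it with rank--nullity, which is more elementary.

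For the upper bound, after the column replacement and the $|c_{j_0}|\ge 1/k$ step (identical in both proofs), the paper performs row operations to zero out all but one entry of the small column, at the cost of replacing the $\theta_i$ by functionals $\bar\theta_i$ of norm $\le 2$, which is where the $2^{k-1}$ comes from; the expansion then has a single term. You skip the row reduction and directly Laplace-expand the modified determinant along the small column: each of the $k$ entries is bounded by $\|Tv^*\|\le F_k(T)+\epsilon$ and each cofactor is a $(k-1)\times(k-1)$ matrix built from unit functionals and unit points, hence bounded by $E_{k-1}(T)$. This yields the constant $k^2$, which is at least as good as $k2^{k-1}$ for every $k\ge1$, and your closing observation that $k\le 2^{k-1}$ correctly delivers the stated bound. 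Both proofs are valid; yours is shorter, more elementary, and sharper in the constant.
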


\begin{proof}
  We first show $E_k(T)\le k2^{k-1}E_{k-1}(T)F_k(T)$. We may assume
  $E_k(T)>0$ as otherwise the inequality is trivial.  Let
  $\theta_1,\ldots,\theta_k$ be elements of the unit sphere of $X^*$
  and $x_1,\ldots,x_k$ be elements of the unit sphere of $X$.  Let $U$
  be the matrix with entries $\theta_i(Tx_j)$. Assume that $\det U\ne
  0$.  Since $x_1,\ldots,x_k$ span a $k$-dimensional space, there
  exists a $v=a_1x_1+\ldots+a_kx_k$ of norm 1 such that $\|Tv\|\le
  F_k(T)$. By the triangle inequality, one of the $|a|$'s, say
  $|a_{j_0}|$, must be at least $\frac 1k$. Let $\tilde x_j=x_j$ for
  $j\ne j_0$ and $\tilde x_{j_0}=v$ and set $\tilde U$ to be the
  matrix with entries $\theta_i(T\tilde x_j)$. By properties of
  determinants, we see $|\det\tilde U| =|a_{j_0}|\,|\det U|\ge \tfrac 1k|\det
  U|$. Next, there exists $i_0$ for which $|\theta_{i_0}(Tv)|$ is
  maximal, this maximum not being 0 since $|\det\tilde U|$ is
  positive.  Let
  $\bar\theta_i=\theta_i-(\theta_i(Tv)/\theta_{i_0}(Tv))\theta_{i_0}$
  for $i\ne i_0$ and $\bar\theta_{i_0}=\theta_{i_0}$, so that
  $\|\bar\theta_i\|\le 2$ and $\bar\theta_i(Tv)=0$ for $i\ne i_0$.

  Now let $\bar U_{ij}=\bar\theta_i(T\tilde x_j)$, so that $|\det
  U|\le k |\det\tilde U| =k|\det \bar U|$. Finally, the $j_0$th column
  of $\bar U$ has a single non-zero entry that is at most $\|Tv\|\le
  F_k(T)$ in absolute value. The absolute value of the cofactor is
  $\left|\det \big(\bar \theta_i(T(\tilde x_j))\big)_{i\ne i_0,\ j\ne
      j_0}\right| \le 2^{k-1}E_{k-1}(T)$.  Taking a supremum over
  choices of $(\theta_i)$ and $(x_j)$, we have shown $E_k(T)\le
  k2^{k-1}E_{k-1}(T)F_k(T)$.

  For the other inequality, we may suppose that $T$ has kernel of
  codimension at least $k$, otherwise $F_k(T)=0$ and there is nothing
  to prove. Let $\theta_1,\ldots,\theta_{k-1}$ and
  $x_1,\ldots,x_{k-1}$ be arbitrary.  Let $\Delta$ be the determinant
  of the matrix with entries $\theta_i(Tx_j)$.  Let $V$ be a
  $k$-dimensional subspace such that $V\cap \ker T=\{0\}$. Let
  $W=\lin(Tx_1,\ldots,Tx_{k-1})$. Using Lemma \ref{lem:GK}, let
  $z$ be a point in the unit
  sphere of $T(V)$ such that $d(z,W)=1$.  Let
  $v\in V\cap S_X$ be such that $T(v)$ is a multiple of $z$.  Let
  $\theta_k$ be a linear functional of norm 1 such that
  $\theta_k\vert_W=0$ and $\theta_k(z)=1$ and let $x_k=v$. Now forming
  the $k\times k$ matrix $\big(\theta_i(x_j)\big)_{1\le i,j\le k}$, we
  see the absolute value of the determinant is $\Delta\cdot
  \theta_k(Tv)= \Delta\cdot\|Tv\| \ge\Delta\cdot \inf_{x\in V\cap
    S_X}\|Tx\|$.  Taking suprema over choices of $x$'s, $\theta$'s and
  $k$-dimensional $V$'s, we see that $E_k(T)\ge F_k(T)E_{k-1}(T)$ as
  required.

\end{proof}

\begin{cor}\label{cor:equiv}[of Lemmas \ref{lem:primaldual} and \ref{lem:Frec}]
  For each $k>0$, the quantities $D_k(T)$, $D_k(T^*)$, $E_k(T)$ and
  $\prod_{i\le k}F_i(T)$ agree up to multiplicative factors that may
  be bounded by constants independent of the bounded linear map $T$ 
  and the Banach spaces $X$ and $Y$. Further, $F_i(T)$ and $F_i(T^*)$ agree 
  up to a uniformly bounded multiplicative factor. 
\end{cor}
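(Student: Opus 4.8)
The plan is to assemble the corollary from the inequalities already in hand. From the proof of Lemma~\ref{lem:primaldual} we have, besides $c_kD_k(T)\le D_k(T^*)\le C_kD_k(T)$, the chains $D_k(T)\le E_k(T)\le k!\,D_k(T)$ and $D_k(T^*)\le E_k(T)\le k!\,D_k(T^*)$ (these are exactly \eqref{eq:primal} and \eqref{eq:dual}); these already show that $D_k(T)$, $D_k(T^*)$ and $E_k(T)$ are pairwise comparable with factors depending only on $k$. So the one remaining task is to compare $E_k(T)$ with $\prod_{i\le k}F_i(T)$.

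For the lower bound I would simply iterate the left-hand inequality of Lemma~\ref{lem:Frec}:
\[
E_k(T)\ge E_{k-1}(T)F_k(T)\ge E_{k-2}(T)F_{k-1}(T)F_k(T)\ge\cdots\ge E_1(T)\prod_{i=2}^k F_i(T),
\]
and since $E_1(T)=F_1(T)=\|T\|$ directly from the definitions (or, equivalently, reading $E_0(T)$ as the determinant of the empty matrix, $E_0(T)=1$), this gives $\prod_{i\le k}F_i(T)\le E_k(T)$.

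For the upper bound I would induct on $k$ using the right-hand inequality of Lemma~\ref{lem:Frec}. The base case $E_1(T)=F_1(T)$ is as above; and if $E_{k-1}(T)\le a_{k-1}\prod_{i\le k-1}F_i(T)$, then $E_k(T)\le k2^{k-1}E_{k-1}(T)F_k(T)\le a_k\prod_{i\le k}F_i(T)$ with $a_k:=k2^{k-1}a_{k-1}$, so one may take $a_k=\prod_{j=1}^k j2^{j-1}$. Combining the two bounds, $E_k(T)$ and $\prod_{i\le k}F_i(T)$ agree up to the factor $a_k$, which depends only on $k$; together with the first paragraph, all four quantities are mutually comparable with multiplicative constants depending only on $k$ and not on $T$ or $X$.

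I do not expect a genuine obstacle here: the whole argument is bookkeeping with the estimates from Lemmas~\ref{lem:primaldual} and~\ref{lem:Frec}. The only point that needs a word of care is the base of the iteration — interpreting $E_0(T)$ as $1$, or equivalently observing $E_1(T)=F_1(T)=\|T\|$ from the definitions — and checking that each application of Lemma~\ref{lem:Frec} is made with index at least $1$, so that the recursion terminates cleanly.
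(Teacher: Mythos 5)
Your proposal is correct and is exactly the bookkeeping the paper intends: the paper omits a proof because the corollary is a direct consequence of inequalities \eqref{eq:primal}--\eqref{eq:dual} from Lemma~\ref{lem:primaldual} together with a telescoping of Lemma~\ref{lem:Frec}, which is precisely what you carry out. The base identification $E_1(T)=F_1(T)=\|T\|$ (or equivalently $E_0(T)=1$) is the right way to anchor the recursion, and your constants $a_k=\prod_{j\le k}j\,2^{j-1}$ and $k!$ depend only on $k$, as required.
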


We comment that besides these approximate Banach space versions of 
singular values, additional related quantities are given by Gelfand numbers
and Kolmogorov numbers (see the book of Pisier \cite{Pisier} for more information). 
It can be checked that these quantities also agree with the sequence of $F_i$'s
up to bounded multiplicative factors (dependent on $i$, but independent of 
$X$ and $T$). 

By definition, for each natural number $k$, one can find sequences
$(\theta_i)_{i\le k}$ and $(x_j)_{j\le k}$ such that
$\det(U((\theta_i),(x_j)))\asymp E_k(T)$.  We now show that we can
find infinite sequences $(\theta_i)$ and $(x_j)$ so that, for each
$k$, $\det(U((\theta_i)_{i\le k},(x_j)_{i\le k}))\asymp E_k(T)$.

\begin{lem}[Existence of consistent sequences]\label{lem:genseqs}
  Let $X$ and $Y$ be infinite-dimensional Banach spaces. 
  For any linear map $T\colon X\to Y$, there exist $(\theta_i)_{i\ge 1}$
  in $S_{Y^*}$ and $(x_j)_{j\ge 1}$ in $S_X$ such that for all $k$,
  \begin{align*}
    &\det\left((\theta_i(Tx_j))_{1\le i,j\le k}\right)\ge
    \tfrac{1}{2^k}\prod_{i\le k}F_i(T)
    \text{; and}\\
    &\|Tx\|\ge 4^{-k}F_k(T)\|x\|\text{ for all $x\in
      \lin(x_1,\ldots,x_k)$.}
  \end{align*}
\end{lem}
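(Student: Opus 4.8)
The plan is to construct the two sequences by induction on their length, arranging a triangular structure that renders the determinant estimate automatic, and reducing the real work to a single geometric extension step. First observe that $F_j(T)$ is non-increasing in $j$ (a $j$-dimensional subspace contains $(j-1)$-dimensional ones, so the defining infimum only grows), so if $F_k(T)=0$ for some $k$ then $F_j(T)=0$ for all $j\ge k$ and both conclusions become vacuous from stage $k$ onwards — simply append unit vectors keeping the span of full dimension and, at each stage, a norm-one functional annihilating $T(\lin(x_1,\dots,x_{j-1}))$, furnished by Hahn--Banach. So I may assume $F_k(T)>0$ for all $k$.

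Writing $Y_j=\lin(x_1,\dots,x_j)$, the inductive hypothesis at stage $k$ will be: $\dim Y_k=k$; $\|Tx\|\ge 4^{-k}F_k(T)\|x\|$ for all $x\in Y_k$; and for each $j\le k$, the functional $\theta_j$ vanishes on $T(Y_{j-1})$ while $\theta_j(Tx_j)=d(Tx_j,T(Y_{j-1}))\ge\tfrac12 F_j(T)$. The purpose of this bookkeeping is that the matrix $(\theta_i(Tx_j))_{i,j\le\ell}$ is then upper triangular for every $\ell\le k$ — when $i>j$ we have $Tx_j\in T(Y_{i-1})$, on which $\theta_i$ vanishes — with diagonal entries $\theta_j(Tx_j)\ge\tfrac12 F_j(T)$, so its determinant equals $\prod_{j\le\ell}\theta_j(Tx_j)\ge 2^{-\ell}\prod_{j\le\ell}F_j(T)$. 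Hence once the induction runs for all $k$, both assertions of the lemma hold simultaneously.

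For the inductive step, given $x_1,\dots,x_{k-1}$ and $\theta_1,\dots,\theta_{k-1}$, set $W=T(Y_{k-1})$, a $(k-1)$-dimensional subspace, fix a small $\epsilon>0$, and choose a $k$-dimensional $V$ with $\inf_{v\in V\cap S_X}\|Tv\|\ge(1-\epsilon)F_k(T)$; this infimum being positive, $T$ is injective on $V$ and $T(V)$ is genuinely $k$-dimensional. It suffices to produce a unit vector $x_k\notin Y_{k-1}$ with $\dim\lin(Y_{k-1},x_k)=k$, with $\lin(Y_{k-1},x_k)$ being $4^{-k}F_k(T)$-expanding, and with $d(Tx_k,W)\ge\tfrac12 F_k(T)$; then Hahn--Banach gives a norm-one $\theta_k$ vanishing on $W$ with $\theta_k(Tx_k)=d(Tx_k,W)$, closing the induction. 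If $Y_{k-1}\subseteq V$ this is immediate: pick $x_k\in V\cap S_X$ with $d(x_k,Y_{k-1})=1$ (possible by the finite-dimensional form of Riesz's lemma, Lemma~211 of \cite{Kato2}), so that $\lin(Y_{k-1},x_k)=V$ is $(1-\epsilon)F_k(T)$-expanding and $d(Tx_k,W)=\inf_{y\in Y_{k-1}}\|T(x_k-y)\|\ge(1-\epsilon)F_k(T)$ since $x_k-y\in V$.

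The general configuration, where $Y_{k-1}\not\subseteq V$, is the main obstacle. Here I would look for $x_k$ as a unit vector $u\in V\setminus Y_{k-1}$ and verify expansion of $Z:=Y_{k-1}+\lin(u)$ directly: writing $c$ for the expansion constant of $Y_{k-1}$ and $\delta:=d(Tu,W)$, every $z=y+tu$ with $y\in Y_{k-1}$ satisfies $\|Tz\|\ge|t|\delta$ (project through $X\to X/W$) and $\|Tz\|\ge c\|y\|-|t|\|Tu\|$, and optimizing over these two bounds makes $Z$ expand at a rate comparable to $c\delta/(c+\delta+\|Tu\|)$. So it is enough to exhibit $u\in V\cap S_X$ with $d(Tu,W)$ at least a fixed fraction of $F_k(T)$ and $\|Tu\|$ at most a fixed multiple of $F_k(T)$ — such $u$ lies outside $Y_{k-1}$ automatically, and since $Y_{k-1}$ is $4^{-(k-1)}F_{k-1}(T)$-expanding with $F_{k-1}(T)\ge F_k(T)$, the rate for $Z$ clears $4^{-k}F_k(T)$ once the constants are tracked with care; this is where the budgets $2^{-k}$ and $4^{-k}$ in the statement are calibrated so that the per-step loss telescopes. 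Exhibiting this $u$ is the hard part: one exploits that $T(V)$ is $k$-dimensional and hence not contained in the $(k-1)$-dimensional $W$, that $\inf_{v\in V\cap S_X}\|Tv\|\le F_k(T)$ forces $V$ to contain unit vectors with image norm near $F_k(T)$, and the Gohberg--Krein/Riesz lemma to pin down a direction in $T(V)$ staying bounded away from $W$; a case analysis on $\dim(T(V)\cap W)$, a suitable (possibly $k$-dependent) choice of $\epsilon$, and if necessary a better-positioned replacement for $V$ among the near-optimal subspaces, handle the degenerate arrangements. I expect this extraction of a good unit vector $u$, with all constants independent of $T$ and of $X$, to be the step requiring the most work.
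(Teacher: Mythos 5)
Your set-up matches the paper's proof closely: the triangular structure for the $\theta_i$, the inductive hypothesis, the choice of a $\tfrac12 F_k$-expanding $k$-dimensional subspace $V$, and the two basic bounds $\|T(y+tu)\|\ge |t|\,d(Tu,W)$ and $\|T(y+tu)\|\ge\|Ty\|-|t|\|Tu\|$ are all the same ingredients, and your determinant estimate goes through identically. The gap is exactly where you flag it, in the choice of $x_k$, but it arises because you are asking for two separate things --- a lower bound on $d(Tu,W)$ \emph{and} an upper bound on $\|Tu\|$ --- and that double demand is what forces the case split on whether $Y_{k-1}\subseteq V$ and leaves you stuck. As your own estimate $c\delta/(c+\delta+\|Tu\|)$ shows, if $\|Tu\|$ cannot be controlled the argument genuinely fails.

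The missing idea is to apply the Gohberg--Krein/Riesz lemma \emph{inside the image} $T(V)$, not in $X$. Since $T$ expands $V$ it is injective there, so $T(V)$ is $k$-dimensional; pick $z\in T(V)\cap S_X$ with $d(z,W)=1$, and let $x_k\in V\cap S_X$ be the normalized preimage of $z$. Then by construction $d(Tx_k,W)=\|Tx_k\|\ge\tfrac12 F_k(T)$, i.e.\ the two quantities $d(Tu,W)$ and $\|Tu\|$ in your bounds \emph{coincide}, so adding the two inequalities makes $\|Tx_k\|$ cancel: $2\|T(y+a_kx_k)\|\ge\|Ty\|$. Now if $|a_k|\ge\tfrac12$ the first bound alone gives $\|Tx\|\ge\tfrac14 F_k(T)$; if $|a_k|<\tfrac12$ then $\|y\|\ge\tfrac12$ and the averaged bound plus the inductive hypothesis plus $F_{k-1}(T)\ge F_k(T)$ give $\|Tx\|\ge\tfrac14\cdot 4^{-(k-1)}F_{k-1}(T)\ge 4^{-k}F_k(T)$. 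No upper bound on $\|Tx_k\|$ is ever needed, and the relative position of $Y_{k-1}$ and $V$ is irrelevant --- the requirement $d(x_k,Y_{k-1})=1$ in $X$ that drove your case analysis is an artifact; the proof only needs $d(Tx_k,W)=\|Tx_k\|$, a statement entirely about images.
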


\begin{proof}
  The proof is by induction: suppose $(\theta_i)_{i<k}$ and
  $(x_j)_{j<k}$ have been chosen and satisfy the desired inequalities
  at stage $k-1$.  Then pick an arbitrary $k$-dimensional space $V$
  such that $\|Tv\|\ge \frac 12F_k(T)\|v\|$ for all $v\in V$. Using
  Lemma \ref{lem:GK}, let $x_k\in V\cap S_X$ be such that
  $d(Tx_k,\lin(Tx_1,\ldots,Tx_{k-1}))= \|Tx_k\|$.  Finally choose
  $\theta_k$ of norm 1 such that $\theta_k(Tx_i)=0$ for $i<k$ and
  $\theta_k(Tx_k)=\|Tx_k\|$. The determinant inequality at stage $k$
  follows.

  Let $x=a_1x_1+\ldots+a_kx_k$ be of norm 1. Then
  \begin{equation}\label{eq:rough1}
    \|Tx\|\ge |a_k|d(Tx_k,\lin(Tx_1,\ldots,
    Tx_{k-1}))= |a_k|\|Tx_k\|\ge |a_k|F_k(T)/2.
  \end{equation}
  Also,
  \begin{equation*}
    \|Tx\|\ge \left\|T(\textstyle\sum_{j<k}a_jx_j)\right\|-|a_k|\|Tx_k\|.
  \end{equation*}
  Averaging the inequalities, we get
  \begin{equation}\label{eq:rough2}
    \|Tx\|\ge \tfrac12 \left\|T(\textstyle\sum_{j<k}a_jx_j)\right\|.
  \end{equation}
  If $|a_k|>\frac 12$, the first inequality yields $\|Tx\|\ge \frac
  14F_k(T)$.  If $|a_k|\le \frac 12$, then $\|\sum_{j<k}a_jx_j\|\ge
  \frac 12$ and the second inequality combined with the inductive
  hypothesis gives $\|Tx\|\ge \frac 14 4^{-(k-1)}F_{k-1}(T)$.

\end{proof}

\begin{lem}[Lower bound on volume growth in a subspace of finite codimension]\label{lem:ExponentsOfReducedCocycle}
  For any natural numbers $k>m$, there exists $C_k$ such that if $X,Y$
  are Banach spaces, $T\colon X\to Y$ is a linear map and $V$ is a closed
  subspace of $X$ of codimension $m$, then $D_k(T)\le
  C_kD_m(T)D_{k-m}(T|_V)$.
\end{lem}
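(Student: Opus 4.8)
The plan is to transfer the inequality from the volumes $D_n$ to the quantities $F_n$ by means of Corollary~\ref{cor:equiv}, where it becomes essentially a dimension count. Corollary~\ref{cor:equiv} supplies universal constants $0<p_n\le q_n$, depending only on $n$, with $p_n\prod_{i\le n}F_i(S)\le D_n(S)\le q_n\prod_{i\le n}F_i(S)$ for every bounded linear map $S$ of every Banach space; the same holds for $S=T|_V$, since the definitions of $D_\bullet,E_\bullet,F_\bullet$ and the proofs of Lemmas~\ref{lem:primaldual} and~\ref{lem:Frec} never used that the codomain coincides with the domain, so they apply verbatim to $T|_V\colon V\to X$. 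We may assume $\dim X\ge k$: otherwise any $k$ vectors of $X$ are linearly dependent, so $\vol_k(Tv_1,\dots,Tv_k)=0$ for every choice, $D_k(T)=0$, and the inequality is trivial. In particular $\dim V=\dim X-m\ge k-m$, so all suprema below range over nonempty collections.

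The one step with any content is the elementary inequality
\[
  F_{m+j}(T)\le F_j(T|_V)\qquad(1\le j\le k-m).
\]
To prove it, fix an $(m+j)$-dimensional subspace $W\subseteq X$. Since $V$ has codimension $m$, we have $\dim(W\cap V)\ge j$, so we may pick a $j$-dimensional subspace $W'\subseteq W\cap V$. Then $W'\cap S_X\subseteq W\cap S_X$ forces $\inf_{v\in W'\cap S_X}\|Tv\|\ge\inf_{v\in W\cap S_X}\|Tv\|$, while $W'\subseteq V$ and $W'\cap S_X=W'\cap S_V$ give $\inf_{v\in W'\cap S_X}\|Tv\|\le F_j(T|_V)$. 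Taking the supremum over all such $W$ yields the claim.

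Assembling these, I would conclude:
\begin{align*}
  D_k(T)&\le q_k\prod_{i=1}^{k}F_i(T)
        = q_k\Big(\prod_{i=1}^{m}F_i(T)\Big)\Big(\prod_{j=1}^{k-m}F_{m+j}(T)\Big)\\
        &\le q_k\Big(\prod_{i=1}^{m}F_i(T)\Big)\Big(\prod_{j=1}^{k-m}F_j(T|_V)\Big)
        \le \frac{q_k}{p_m\,p_{k-m}}\,D_m(T)\,D_{k-m}(T|_V),
\end{align*}
so $C_k:=q_k\big/\min_{1\le m<k}(p_m p_{k-m})$ works, a constant depending on $k$ alone.

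The only real care needed is the bookkeeping with the universal constants and the (harmless) point that Corollary~\ref{cor:equiv} is being applied to the non-endomorphism $T|_V\colon V\to X$. A more hands-on route would instead start from $v_1,\dots,v_k\in S_X$, use \eqref{eq:changeByLinComb} to subtract from $v_{m+1},\dots,v_k$ suitable combinations of a well-chosen $m$-element subset of the $v_i$ whose images in $X/V$ form a basis, so that the remaining $k-m$ modified vectors lie in $V$, and then factor $\vol_k$ into an $m$-fold and a $(k-m)$-fold part; but this approach must reorder the vectors — and $\vol_k$ is not permutation-invariant — and must track the effect of renormalizing the modified vectors on the volume, which is exactly the annoyance the $F_\bullet$-argument avoids.
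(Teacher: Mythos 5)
Your proof is correct, and it takes a genuinely different route from the paper's. The paper works directly with determinants: it fixes a bounded projection $P\colon X\to V$ with $\|P\|\lesssim\sqrt m$ (Wojtaszczyk, Cor.\ III.B.11), writes each $x_j=Px_j+(\mathbf 1-P)x_j$, expands $\det\big(\psi_i(Tx_j)\big)$ by multilinearity into $2^k$ terms, observes that at most $m$ of the chosen summands can involve $(\mathbf 1-P)x_j$ (since $(\mathbf 1-P)X$ is $m$-dimensional, else the determinant vanishes), renormalizes, and finally splits $d_kT(y_1,\dots,y_k)\le d_mT(y_1,\dots,y_m)\,d_{k-m}T(y_{m+1},\dots,y_k)$. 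Your argument instead reduces everything to the quantities $F_j$, where the whole content becomes the dimension count $F_{m+j}(T)\le F_j(T|_V)$: this is slicker, avoids the projection lemma altogether, and exposes the reason the inequality is true more transparently. The price you pay is the (correct, but worth flagging explicitly) observation that the chain $D_n\le E_n\le n!D_n$ and Lemma~\ref{lem:Frec} apply unchanged to the non-endomorphism $T|_V\colon V\to X$ --- the paper's proof sidesteps this by only ever evaluating $d_{k-m}T$ on vectors that happen to lie in $V$, which is bounded by $D_{k-m}(T|_V)$ directly from the definition of the supremum, without needing any lemma to hold for $T|_V$. Both proofs are sound; yours is more self-contained conceptually, the paper's leans on a standard but nontrivial Banach-space fact.
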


\begin{proof}
  Let $\ep>0$.  Let $P$ be a projection from $X$ to $V$ of norm at
  most $\sqrt m+\ep$ (such a projection exists
  by Corollary III.B.11 in the book of Wojtaszczyk \cite{Wojtaszczyk}).  
  Then, $\|I -P\|\leq \sqrt m +\ep+1$.
  Let $x_1,\ldots,x_k$ be a sequence of vectors in $X$ of norm 1.  The
  proof of Lemma \ref{lem:primaldual} shows that there exist
  $\psi_1,\ldots,\psi_k$ in $S_{Y^*}$ such that $\det(\psi_i(x_j)) \ge
  d_kT(x_1,\ldots,x_k)$. Write $P_1$ for $P$ and $P_0$ for $I-P$, 
  which has $m$-dimensional range.  There exists a choice
  $\epsilon_1,\ldots\epsilon_k\in\{0,1\}^k$ such that
  $|\det(\psi_i(P_{\ep_j}x_j))|>2^{-k}d_kT(x_1,\ldots,x_k)$, by
  multilinearity of the determinant.  At most $m$ of the $\epsilon_j$
  can be 0, as otherwise more than $m$ vectors lie in a common
  $m$-dimensional space, so that at least $k-m$ of them lie in $V$.
  Hence, there exist vectors $z_1,\ldots,z_m$ in $S_X$ and
  $z_{m+1},\ldots,z_k$ in $S_X\cap V$ such that
$$|\det(\psi_i(z_j))|\ge (2(\sqrt m +\ep+1))^{-k}d_kT(x_1,\ldots,x_k).
$$
Using the proof of Lemma \ref{lem:primaldual} again, we deduce that
\begin{align*}
  d_mT(z_1,\ldots,z_m) & d_{k-m}T(z_{m+1},\ldots,z_k)\ge d_kT(z_1,\ldots,z_k)\\
  &\ge (2(\sqrt m +\ep+1))^{-k}/(k!) d_kT(x_1,\ldots,x_k).
\end{align*}
This completes the proof.
\end{proof}

\section{Random dynamical systems}\label{sec:rds}

A closed subspace $Y$ of $X$ is called \textsl{complemented} if there
exists a closed subspace $Z$ such that $X$ is the direct sum of $Y$
and $Z$, written $X=Y\oplus Z$. That is, for every $x\in X$, there
exist $y\in Y$ and $z\in Z$ such that $x=y+z$, and this decomposition
is unique.  The Grassmannian $\mathcal G(X)$ is the set of closed
complemented subspaces of $X$. We equip $\mathcal G(X)$ with the metric
$d(Y,Y')=d_H(Y\cap S_X,Y'\cap S_X)$ where $d_H$ denotes the Hausdorff
distance.  We denote by $\mathcal G^k(X)$ the
collection of closed $k$-codimensional subspaces of $X$ (these are
automatically complemented), by $\mathcal G_k(X)$ the $k$-dimensional 
subspaces of $X$. 
If $U$ and $V$ are closed subspaces of $X$ such that $U\oplus V=X$, then
$\proj_{U\parallel V}$ is the projection onto $U$ parallel to $V$
(that is $\proj_{U\parallel V}(x)\in U$ and $x-\proj_{U\parallel V}(x)\in V$). 
We record some facts about Grassmannians in the following lemma. 

\begin{lem}\label{lem:Grass}
Let $X$ be a  Banach space with separable dual.
Let $k\in\mathbb N$. 
The following facts hold:
\begin{enumerate}
\item\label{it:compsep}
$\mathcal G^k(X)$ is complete and separable.
\item
If $V\in \mathcal G(X)$, $W\in \mathcal G(X)$
and $V\oplus W=X$, then 
$\frac 1\delta\le \|\proj _{V\parallel W}\|\le \frac 2\delta$, where
$\delta=\inf_{x\in V\cap S_X,y\in W\cap S_X}\|x-y\|$. 
\label{it:projbound}
\item 
There exists $K>0$ (independent of $X$) such that if $V\in G^k(X)$, 
there exists a subspace $W\in \mathcal G_k(X)$
such that $\|\proj_{W\parallel V}\|\le K$ 
and $\|\proj_{V\parallel W}\|\le K$. \label{it:projbound2}
\item (Symmetry of closeness) There exists $K>0$ such that if 
$V,V'\in \mathcal G^k(X)$, then
$$
\sup_{v'\in V'\cap S_X}\inf_{v\in V\cap S_X}\|v-v'\|
\le K \sup_{v\in V\cap S_X}\inf_{v'\in V'\cap S_X}\|v-v'\|.
$$
\label{it:symmclose}
\end{enumerate}
\end{lem}

\begin{proof}
The map $\perp\colon\mathcal G^k(X)\to
\mathcal G_k(X^*)$ defined by $V^\perp=\{\theta\in X^*\colon\theta|_V=0\}$
is a bi-Lipschitz bijection \cite{Kato}. Separability of $\mc{G}^k(X)$
and symmetry of closeness
are proved in \cite{GTQuas}. The completeness is stated but not
proved in Kato's book. We sketch a proof using results from the appendix of \cite{GTQuas}. 
Let $V$ be a $k$-dimensional subspace 
of a Banach space $Z$ and let $v_1\ldots,v_k$ be an Auerbach basis. 
By the Hahn-Banach theorem, there exist $\theta_1,\ldots,\theta_k\in Z^*$
of norm 1 such that $\theta_i(v_j)=\delta_{ij}$. Now if $\tilde v_1,\ldots,
\tilde v_k$ in $Z$ satisfy $\|\tilde v_i-v_i\|<\epsilon/k$ for each $k$, then
one has $\|\sum a_i\tilde v_i\|\ge (1-\ep)\max|a_i|$ (to see this, apply 
$\theta_{i_0}$ where $|a_{i_0}|=\max |a_i|$). From this, we see
that $\tilde v_1,\ldots,\tilde v_k$ is an $\epsilon$-nice basis (as defined in
\cite{GTQuas}). Now let $(V_n)$ be a Cauchy sequence in $\mc G_k(Z)$. 
By refining the sequence, one may assume $d(V_n,V_{n+1})<(3k)^{-n}$. 
Choosing an Auerbach basis $v^1_1,\ldots, v^1_k$ for $V_1$, one may
then obtain elements $v^n_1,\ldots,v^n_k$ 
of $V_n$ satisfying $\|v^{n+1}_i-v^n_i\|<(2k+1)^{-n}$. 
This is a convergent sequence of $\frac 12$-nice bases. Letting $v^*_i$
be the limit of $v^n_i$, Corollary B6 of \cite{GTQuas} shows that
$d(V_n,V_*)\to 0$, where $V_*$ is the subspace spanned by the $v^*_i$. 
This establishes completeness of $\mc{G}_k(X^*)$ and hence 
completeness of $\mc G^k(X)$. 
To see \eqref{it:projbound}, if $v_n\in V\cap S_X$
and $w_n\in W\cap S_X$, satisfy $\|v_n-w_n\|\to \delta$ then 
$\|\proj_{V\parallel W}(v_n-w_n)\|=1$ shows the first inequality. For the second 
inequality, let $v\in V\cap S_X$. If $1-\frac\delta2<\|w\|<1+\frac\delta 2$,
then $\|v+w\|\ge \|v+\frac{\|v\|}{\|w\|}w\|-|\|v\|-\|w\||\ge \frac\delta 2$.
If $\|w\|$ lies outside this range, then the same conclusion follows from the 
triangle inequality, so that $\|\proj_{V\parallel W}(v+w)\|=\|v\|
\le \frac 2\delta\|v+w\|$. 
\eqref{it:projbound2} can be found in \cite{Wojtaszczyk}, Corollary III.B.11.
\end{proof}

For a Banach space $X$, the bounded linear maps from $X$ to itself will be 
written $B(X,X)$ and $\mathcal B_X$ will be the Borel $\sigma$-algebra on $X$.
In this section, we consider \textsl{random dynamical systems}.  These
consist of a tuple $\mathcal{R}=(\Omega,\mathcal F,\mathbb P, \sig, X,
\LL)$, where $(\Omega,\mathcal F,\mathbb P)$ is a complete
probability space; $\sig$ is a measure
preserving transformation of $\Omega$; $X$ is a separable Banach space; the
\textsl{generator} $\LL\colon \Omega\to B(X,X)$
is strongly measurable (that is for fixed $x\in X$,
$\omega\mapsto \LL_\omega x$ is $(\mathcal F,\mathcal B_X)$-measurable); and 
$\log\|\LL_\om\|$ is integrable. An alternative description of strong
measurability is that the map $\omega\mapsto \LL_\omega$ is $(\mathcal F,
\mathcal S)$-measurable, where $\mathcal S$ is the Borel $\sigma$-algebra
of the strong operator topology on $B(X,X)$ (see Appendix A of \cite{GTQuas} for details). 
In the context where $X$ is separable and the operators are bounded, 
strong measurability is equivalent to 
$(\mathcal F\otimes \mathcal B_X,\mathcal B_X)$-measurability of  
the map $(\omega,x)\mapsto
\LL_\omega x$ (\cite{GTQuas}).

A random dynamical system gives rise to a cocycle of bounded linear
operators $\LL_\om^{(n)}$ on $X$, defined
by $\LL_\om^{(n)}(x)=\LL_{\sigma^{n-1}\om}\circ\dots\circ\LL_\om x$.
We will consider $\mathcal{F}$ and
$\mathbb P$ to be fixed, and thus refer to a random dynamical system
as $\mathcal{R}=(\Omega,\sigma,X,\mathcal L)$.  We say $\mc{R}$ is
ergodic if $\sig$ is ergodic.

When the base $\sig$ is invertible, we can also define the dual random
dynamical system $\mc{R}^*=(\Omega,\mathcal F,\mathbb P, \sig^{-1},
X^*, \LL^*)$, where $X^*$ is the dual of $X$ and $\LL^*_\om(\theta)
:= (\LL_{\sig^{-1}\om})^* \theta$. Notice that $\LL^*_\om$ is \emph{not}
$(\LL_\om)^*$. The rationale for this
is that $\LL_\om$ maps the $X$-fibre over $\omega$ to the $X$-fibre over
$\sigma(\omega)$ and similarly $\LL_\om^*$ maps the $X^*$-fibre over $\omega$
to the $X^*$-fibre over $\sigma^{-1}\omega$.
In this way, $\theta(\LL_\om x)=
\LL^*_{\sig\om}\theta(x)$ and, more generally,
$\LL^{*(n)}_{\sigma^n\omega}\theta(x)=\theta(\LL^{(n)}_\omega x)$ for
every $x\in X, \theta \in X^*$.  Thus, $\LL^{*(n)}_{\sig^n \om}=
(\LL_{\om}^{(n)})^*$.

\begin{lem}[Measurable dense subset of a family of subspaces]
\label{lem:measurabledense}
Let $X$ be a separable Banach space. Let $V\colon \Omega\to 
\mathcal G^k(X)$ be measurable. Then there exist sequences of
measurable functions $u_n\colon\Omega\to S_X$ 
and $u_n'\colon \Omega\to B_X$ such that
$\{u_n(\om)\colon n\in\N\}$ is a dense subset of $V(\om)\cap S_X$
and $\{u_n'(\om)\colon n\in \N\}$ is a dense subset of $V(\om)\cap B_X$.
\end{lem}

\begin{proof}
  First, for fixed $v\in X, \om \mapsto d(v,V(\om))$ is a measurable
  function, as it is the composition of continuous and measurable
  functions.  Fix a dense sequence $v_1,v_2, \ldots \in S_X$.  Now for
  each $j$, set $u_j^0(\om) = v_j$, and let $u_j^{k+1}(\om) = v_l$,
  where $l=\min \{ m: d(v_m,V(\om)\cap S_X) \leq \tfrac 14
  d(u_j^{k}(\om),V(\om)\cap S_X) \text{ and } d(v_m,u_j^k(\om))\leq 2
  d(u_j^k(\om),V(\om)\cap S_X)\}$.  For each $j$, this is a measurable
  convergent sequence and hence the limit point $u_j^\infty(\om)$ is
  measurable, and belongs to $V(\om)\cap S_X$.  The sequence
  $(u_j^\infty(\om))$ is dense in $V(\om) \cap S_X$ because there are
  $v_j$ arbitrarily close to all points of $V(\om)\cap S_X$. The functions
  $u_n'$ are produced exactly analogously.
\end{proof}

\begin{lem}[Measurability of growth measurements]\label{lem:volmeas}
Let $\mathcal R$ be a random dynamical system
$\mathcal{R}=(\Omega,\sigma,X,\mathcal L)$ acting on a separable
Banach space. 
The following functions are measurable:
\begin{itemize}
\item $\omega\mapsto D_k(\LL_\omega)$;
\item $\omega\mapsto \|\LL_\omega\|$;
\item $\omega\mapsto \ic({\LL_\omega}):=
\inf\Big\{$\parbox{2.5in}{$r>0 : \LL_\om(B_X)$ can be covered by
finitely many balls of radius $r$} $\Big\}$.
\end{itemize}
Further, if $V\colon \Omega\to \mathcal G^k(X)$ is measurable, then 
$\omega\mapsto \|\LL_\om|_{V(\om)}\|$ is measurable. 
\end{lem}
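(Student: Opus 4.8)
The plan is to establish measurability of each of the three functions by exploiting separability of $X$ to reduce each supremum/infimum to one taken over a fixed countable dense set, so that each function becomes a countable sup or inf of manifestly measurable functions. Throughout, I fix a countable dense subset $Q = \{q_1, q_2, \ldots\}$ of $X$ (and, where convenient, of the unit sphere $S_X$); strong measurability of $\LL$ gives that $\om \mapsto \LL_\om q_i$ is measurable for each $i$, hence $\om \mapsto \|\LL_\om q_i\|$ is measurable, and more generally $\om \mapsto \|\LL_\om q_i - \sum_{j} \alpha_j \LL_\om q_j\|$ is measurable for any fixed rational coefficients.

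For $\om \mapsto \|\LL_\om\|$: since $\|\LL_\om\| = \sup_{x \in S_X} \|\LL_\om x\|$ and $\LL_\om$ is bounded (hence continuous), this supremum equals $\sup_i \|\LL_\om q_i\|$ where $q_i$ ranges over the countable dense subset of $S_X$ (or of $B_X$); this is a countable supremum of measurable functions, hence measurable.

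For $\om \mapsto D_k(\LL_\om)$: recall $D_k(\LL_\om) = \sup \{ d_k \LL_\om(v_1,\ldots,v_k) : \|v_i\|=1 \}$, and $d_k \LL_\om(v_1,\ldots,v_k) = \prod_{i=1}^k d(\LL_\om v_i, \lin(\LL_\om v_1, \ldots, \LL_\om v_{i-1}))$. The plan is first to note that for fixed vectors $w_1,\ldots,w_k \in X$, the map $(w_1,\ldots,w_k) \mapsto \vol_k(w_1,\ldots,w_k)$ is continuous on $X^k$: each distance $d(w_i, \lin(w_1,\ldots,w_{i-1}))$ is a continuous function of $(w_1,\ldots,w_i)$ (the infimum defining it can, since the span is finite-dimensional, be taken over a fixed compact set of coefficient tuples after a standard normalization, or one argues directly that $d(\cdot, \lin(\cdot))$ is $1$-Lipschitz in the first argument and continuous in the spanning vectors). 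Consequently $(v_1,\ldots,v_k) \mapsto d_k\LL_\om(v_1,\ldots,v_k) = \vol_k(\LL_\om v_1, \ldots, \LL_\om v_k)$ is continuous in $(v_1,\ldots,v_k)$ for each fixed $\om$ (composition of the continuous map $\LL_\om$ with the continuous $\vol_k$), and measurable in $\om$ for each fixed $(v_1,\ldots,v_k)$ (since $\om \mapsto \LL_\om v_i$ is measurable and $\vol_k$ is continuous, hence Borel). By joint continuity in the $v$'s and density of $Q^k$ in $(S_X)^k$, we get $D_k(\LL_\om) = \sup_{(i_1,\ldots,i_k)} d_k\LL_\om(q_{i_1},\ldots,q_{i_k})$, a countable supremum of measurable functions, hence measurable.

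For $\om \mapsto \ic(\LL_\om)$: the quantity is the Hausdorff measure of noncompactness of $\LL_\om(B_X)$, and the plan is to show
\begin{equation*}
  \ic(\LL_\om) = \inf_{n \ge 1} \inf_{\substack{F \subseteq Q \\ |F| < \infty}} \ \sup_{i} \ d\bigl(\LL_\om q_i,\, F\bigr),
\end{equation*}
where $q_i$ runs over the dense subset of $B_X$ and $F$ over finite subsets of the countable dense set $Q$. The key observations are: (i) if $\LL_\om(B_X)$ is covered by finitely many balls of radius $r$, then by density of $\LL_\om(Q)$ in $\LL_\om(B_X)$ one may perturb the centers slightly into $\LL_\om(Q)$ and then into $Q$, at the cost of enlarging the radius by an arbitrarily small amount, so the infimum over finite subsets of $Q$ recovers $\ic(\LL_\om)$; (ii) for a fixed finite center set $F$, $\sup_i d(\LL_\om q_i, F) = \sup\{ d(\LL_\om x, F) : x \in B_X\}$ by continuity of $\LL_\om$ and density of $Q$ in $B_X$, and each $\om \mapsto d(\LL_\om q_i, F)$ is measurable since $F$ is a fixed finite set and $\om \mapsto \|\LL_\om q_i - f\|$ is measurable for each $f \in F$. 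Since there are only countably many finite subsets of the countable set $Q$, and countably many $n$, the displayed expression exhibits $\ic(\LL_\om)$ as a countable infimum of countable suprema of measurable functions, hence measurable.

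\textbf{Main obstacle.} The genuinely delicate point is the continuity of $\vol_k$ (equivalently, of the iterated distance-to-a-span) jointly in all $k$ vector arguments, which is what licenses replacing the supremum over $(S_X)^k$ by the countable supremum over $Q^k$; the subtlety is that the linear span $\lin(v_1,\ldots,v_{i-1})$ itself varies with the $v_j$'s, and one must check the distance function behaves continuously as the spanning set degenerates toward linear dependence. This is handled by the standard fact that on finite-dimensional spaces the infimum over coefficients is attained, combined with a compactness argument, but it requires a small amount of care. The reduction for $\ic$ is conceptually routine once one is comfortable with $\varepsilon$-perturbation of covering balls, and the $\|\LL_\om\|$ case is immediate.
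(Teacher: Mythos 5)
Your overall strategy — reduce each supremum/infimum to a countable one over a fixed dense set, using strong measurability and separability — is the same as the paper's, and the $\|\LL_\om\|$ and $\ic(\LL_\om)$ cases are handled correctly (the outer $\inf_{n\ge 1}$ in your $\ic$ formula is vestigial since the inner expression does not depend on $n$, but that is harmless).

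However, your treatment of $D_k$ contains a genuine error. You assert that each factor $(w_1,\ldots,w_{i-1},w)\mapsto d(w,\lin(w_1,\ldots,w_{i-1}))$ is a continuous function of the spanning vectors, citing a compactness/normalization argument and $1$-Lipschitzness in $w$. The $1$-Lipschitzness in $w$ is true, but the map is \emph{not} continuous in the spanning vectors: it is only upper semicontinuous. For instance, in $\R^2$ with $w=(1,0)$ and $w_1=(\epsilon,0)$, one has $d(w,\lin(w_1))=0$ for all $\epsilon\neq 0$, yet $d(w,\lin(0))=\|w\|=1$; the distance jumps as the spanning set degenerates to a lower-dimensional span. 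The compactness/attainment argument you invoke does not prevent this — it governs the infimum for a \emph{fixed} spanning set, not the behaviour as the spanning set varies. So your ``main obstacle'' paragraph correctly identifies the danger spot but then offers a resolution that does not actually resolve it.

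What is true, and what your argument needs, is that the \emph{product} $\vol_k$ is continuous even though the individual factors are not. At a tuple $(w_1,\ldots,w_k)$ where the vectors are linearly independent, each factor is continuous (the span varies continuously on a neighbourhood), so $\vol_k$ is continuous there. At a degenerate tuple, $\vol_k=0$; since $\vol_k\geq 0$, lower semicontinuity is automatic, and upper semicontinuity follows because if $i$ is minimal with $w_i\in\lin(w_1,\ldots,w_{i-1})$ then $w_1,\ldots,w_{i-1}$ are independent, so the $i$-th factor tends to $0$ along any approximating sequence while the remaining factors stay bounded by $\prod_{j\neq i}\|w_j\|$. (Alternatively, and more in the spirit needed for the dense-supremum identity, observe that if $D_k(\LL_\om)>0$ then any near-maximal tuple $(v_1,\ldots,v_k)$ has $\LL_\om v_1,\ldots,\LL_\om v_k$ linearly independent, and near such a tuple $\vol_k\circ\LL_\om^{\otimes k}$ is continuous, which is all the density argument requires.) With either of these substitutions the proof goes through and coincides in substance with the paper's, which writes $D_k(\LL_\om)$ as a countable supremum of products of functions $f_{j_i|j_1,\ldots,j_{i-1}}(\om)=\inf_{q\in\Q^{i-1}}\|\LL_\om x_{j_i}-\sum_{l<i}q_l\LL_\om x_{j_l}\|$ over a dense sequence in $B_X$, and implicitly relies on the same continuity fact to equate that countable supremum with $D_k(\LL_\om)$.
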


\begin{proof}
Let $(x_n)$ be a dense subsequence of $B_X$.
By strong measurability, for each fixed $n$, $\omega\mapsto
\|\LL_\omega x_n\|$ is measurable.  Then for each $j_1,\ldots,j_i$, we
have that $f_{j_i|j_1,\ldots,j_{i-1}}(\omega) :=\inf_{q_1,
\dots,q_{i-1}\in\mathbb Q} \|\LL_\omega x_{j_i}-\sum_{1\le l<i}
q_l \LL_\omega x_{j_l}\|$ is measurable, so
$$
D_k(\LL_\omega)= \sup_{j_1,\ldots,j_k}\prod_{i\le
  k}f_{j_i|j_1,\ldots,j_{i-1}}(\omega)
$$
is measurable. In particular, $\omega\mapsto\|\LL_\omega\|=D_1(\LL_\om)$ is measurable. 
We claim that 
\begin{equation}\label{eq:kappa}
\ic(\LL)=\lim_{n\to\infty}
\sup_j \inf_{k\le n}\Big\|\LL x_j-2\|\LL\|x_k\Big\|.
\end{equation}
If this limit is $r$, then there exists $n$ such that
$\sup_j\inf_{k\le n}\Big\|\LL x_j-2\|\LL_\om\|x_k\Big\|<r+\epsilon$. 
This gives a covering of $\{\LL x_j\colon j\in\N\}$ by $n$
balls of radius $r+\epsilon$, so that the left side of \eqref{eq:kappa} is dominated
by the right side. Conversely, if $\ic(\LL)=r$, let $\LL(B_X)$ be covered by 
finitely many balls of radius $r+\epsilon$. These must have centres with norm
at most $2\|\LL\|$ otherwise they do not intersect $\LL(B_X)$. The centres must
therefore be $\epsilon$-approximable by points of the form $2\|\LL\|x_k$, so that
the right side of \eqref{eq:kappa} is at most $r+2\epsilon$. We deduce 
\eqref{eq:kappa} holds and $\omega\mapsto \ic(\LL_\om)$ is measurable. 

Finally, if $V\colon \Omega\to \mathcal G^k(X)$ is measurable, let 
$(u_n(\om))_{n\in\N}$ be a sequence of measurable functions such that
$\{u_n(\om)\colon n\in \N\}$ is dense in $S_{V(\om)}$. Then
$\|\LL_\om|_{V(\om)}\|=\sup_n\|\LL_\om u_n(\om)\|$, which is therefore
measurable. 
\end{proof}

When $\mc{R}$ is ergodic, Lemma~\ref{lem:volmeas} 
combined with Kingman's sub-additive
ergodic theorem ensures the existence of the \textsl{maximal Lyapunov
  exponent} of $\mc{R}$, defined by
$$
\lam(\mc{R}):= \lim_{n\to\infty}\tfrac1n\log \|\LL^{(n)}_\omega\|,
$$
for $\paeom$.  Similarly, using the fact that the Kuratowski index of compactness,
$\kappa(\LL)$,
is also sub-multiplicative and bounded above by the norm, we have
existence of the \textsl{index of compactness} of $\mc{R}$, defined by
$$
\ka(\mc{R}):= \lim_{n\to\infty}\tfrac1n\log \ic(\LL^{(n)}_\omega),
$$
with the property that $\ka(\mc{R}) \leq \lam(\mc{R})$.  

In the case where 
$\LL_\omega$ is independent of $\omega$, $\lam(\mc{R})$ and $\ka(\mc R)$
are the spectral radius and essential spectral radius respectively,
so that $\ka(\mc R)<\lam(\mc R)$ is the quasi-compact case. If the operator
is compact, then $\ka(\mc R)$ is 0. 

Our previous paper \cite{GTQuas} studies the case
in which $\mc R$ is a random dynamical system
where the operators $\LL_\om$ are Perron-Frobenius operators of a family of 
expanding maps and gives sufficient conditions for $\ka(\mc R)<\lam(\mc R)$.

\begin{lem}\label{lem:expDefn}
  Given an ergodic random dynamical system $\mc{R}$, there exist
  constants $\Delta_k=\Delta_k(\mc{R})$ such that for almost every
  $\omega\in\Omega$,
$$
\lim_{n\to\infty}\tfrac1n\log D_k(\LL^{(n)}_\omega)=\Delta_k.
$$
Furthermore, $\frac 1n\log E_k(\LL^{(n)}_\omega)\to\Delta_k$.  Define
$\Delta_0=0$ and let $\mu_k=\Delta_k-\Delta_{k-1}$
for each $k\ge 1$.  Then, $\frac 1n\log
F_k(\LL^{(n)}_\omega)\to\mu_k$.
\end{lem}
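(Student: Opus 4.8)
The plan is to deduce everything from submultiplicativity (Lemma~\ref{lem:submult}), Corollary~\ref{cor:equiv}, and Kingman's subadditive ergodic theorem, together with the measurability already established in Lemma~\ref{lem:volmeas}. First I would observe that by Lemma~\ref{lem:submult} the sequence $\om\mapsto\log D_k(\LL^{(n)}_\om)$ is subadditive along the cocycle: $\log D_k(\LL^{(n+m)}_\om)\le\log D_k(\LL^{(m)}_{\sig^n\om})+\log D_k(\LL^{(n)}_\om)$. Measurability of each term follows from Lemma~\ref{lem:volmeas} applied to the cocycle (or from Remark~\ref{rmk:diffDomain} in the restricted setting). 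The one-sided integrability needed for Kingman is $\log^+ D_k(\LL_\om)\le k\log^+\|\LL_\om\|$ (since $D_k(T)\le\|T\|^k$ directly from the definition of volume), which is integrable because $\log\|\LL_\om\|$ is. Kingman's theorem then gives the almost-everywhere limit $\frac1n\log D_k(\LL^{(n)}_\om)\to\Delta_k$, where by ergodicity $\Delta_k$ is the constant $\inf_n\frac1n\E\log D_k(\LL^{(n)}_\om)\in[-\infty,\infty)$.

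Next I would transfer this limit to $E_k$ and to the product $\prod_{i\le k}F_i$ using Corollary~\ref{cor:equiv}: there are constants $a_k,b_k>0$, independent of the operator, with $a_k D_k(T)\le E_k(T)\le b_k D_k(T)$ and likewise $a_k D_k(T)\le\prod_{i\le k}F_i(T)\le b_k D_k(T)$. Applying these with $T=\LL^{(n)}_\om$, taking logarithms, dividing by $n$ and letting $n\to\infty$ makes the constant factors vanish, so $\frac1n\log E_k(\LL^{(n)}_\om)\to\Delta_k$ and $\frac1n\log\prod_{i\le k}F_i(\LL^{(n)}_\om)\to\Delta_k$ as well. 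Note this requires no submultiplicativity of $E_k$ or $F_k$ separately — it is purely a squeeze against $D_k$.

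Finally, to obtain $\frac1n\log F_k(\LL^{(n)}_\om)\to\mu_k=\Delta_k-\Delta_{k-1}$, I would write $\log F_k(\LL^{(n)}_\om)=\log\prod_{i\le k}F_i(\LL^{(n)}_\om)-\log\prod_{i\le k-1}F_i(\LL^{(n)}_\om)$ and apply the previous paragraph to both products (with the convention that the empty product is $1$ when $k=1$, giving the $\Delta_0=0$ normalization). Dividing by $n$ and passing to the limit on a common full-measure set yields the claim. The main obstacle — really the only subtle point — is the bookkeeping around possibly infinite limits: if $\Delta_{k-1}=-\infty$ the difference $\Delta_k-\Delta_{k-1}$ must be interpreted appropriately (and in fact $F_k$ vanishes identically once the kernel codimension drops below $k$), so I would either restrict attention to the range of $k$ for which $\Delta_k>-\infty$ or note that the statement is vacuous otherwise; apart from this the argument is a routine application of Kingman plus the equivalences of Section~\ref{sec:volume}.
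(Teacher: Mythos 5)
Your argument is correct and follows the same route as the paper: Kingman's subadditive ergodic theorem applied to $\log D_k(\LL^{(n)}_\om)$ (using Lemma~\ref{lem:submult} for subadditivity, Lemma~\ref{lem:volmeas} or Remark~\ref{rmk:diffDomain} for measurability, and integrability of $\log\|\LL_\om\|$), followed by a squeeze against Corollary~\ref{cor:equiv} to transfer the same limit to $E_k$ and to $\prod_{i\le k}F_i$, and a subtraction to isolate $F_k$. The paper's proof is the same two-line deduction; you have simply expanded it, correctly supplying the one-sided integrability bound $D_k(T)\le\|T\|^k$ that Kingman needs and flagging the edge case $\Delta_{k-1}=-\infty$ where the difference formula for $\mu_k$ must be read with care.
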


\begin{proof}
  The first claim follows from Kingman's sub-additive ergodic theorem,
  via Lemma~\ref{lem:volmeas} and
  Lemma~\ref{lem:submult}.  The remaining two claims are consequences
  of Corollary \ref{cor:equiv}.
\end{proof}

The $\mu_k$'s of the previous lemma are called the \textsl{Lyapunov
  exponents} of $\mc{R}$. When $\mu_k>\kappa(\mc{R})$, $\mu_k$ is
called an \textsl{exceptional Lyapunov exponent}.

\begin{thm}[Lyapunov exponents and index of compactness]\label{thm:summ}
  Let $\mc{R}$ be a random dynamical system with ergodic base acting
  on a separable Banach space $X$.  Then
  \begin{itemize}
  \item $\mu_1\ge \mu_2\ge\ldots$;
  \item For any $\rho>\kappa(\mathcal R)$, there are only finitely
    many exponents that exceed $\rho$;
  \item If $\sig$ is invertible, then $\mc{R}$ and $\mc{R}^*$
   have the same Lyapunov exponents.
  \end{itemize}
\end{thm}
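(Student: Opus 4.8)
The plan is to treat the three bullet points separately, deriving all of them from the volume machinery of Section~\ref{sec:volume} together with Lemma~\ref{lem:expDefn}. For the monotonicity $\mu_1 \ge \mu_2 \ge \ldots$, I would argue by contradiction: suppose $\mu_{k+1} > \mu_k$ for some $k$, i.e. $\Delta_{k+1} - \Delta_k > \Delta_k - \Delta_{k-1}$, equivalently $2\Delta_k < \Delta_{k-1} + \Delta_{k+1}$. The idea is that $F_k(\LL^{(n)}_\om) \ge F_{k+1}(\LL^{(n)}_\om)$ is \emph{not} automatic (the sup over $k$-dimensional $V$ versus $(k+1)$-dimensional $V$ goes the wrong way on the infimum), so instead I would use the consistent sequences of Lemma~\ref{lem:genseqs}: pick $(x_j)$ with $\|\LL^{(n)}_\om x\| \ge 4^{-(k+1)} F_{k+1}(\LL^{(n)}_\om)\|x\|$ on $\lin(x_1,\ldots,x_{k+1})$; restricting to $\lin(x_1,\ldots,x_k)$ shows $F_k(\LL^{(n)}_\om) \ge 4^{-(k+1)} F_{k+1}(\LL^{(n)}_\om)$, whence $\mu_k \ge \mu_{k+1}$ after taking $\tfrac1n\log$ and $n\to\infty$ via Lemma~\ref{lem:expDefn}. (Alternatively, and perhaps more cleanly, I would note $F_k(T)^2 \ge F_{k-1}(T)F_{k+1}(T)$ up to constants is what one really wants, but the direct subspace-restriction argument already gives $\mu_k \ge \mu_{k+1}$, which is all three bullets need.)

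For the second bullet, I would relate the index of compactness to the volume growth. The key inequality is that for any $\rho > \ka(\mc R)$ and any $k$ with $\mu_k > \rho$, a $k$-dimensional subspace on which $\LL^{(n)}_\om$ expands by roughly $e^{n\mu_k}$ survives the covering by balls of radius $\approx e^{n\rho}\|\LL^{(n)}_\om(B_X)\|$... more precisely, I would use the standard fact that if $\LL^{(n)}_\om(B_X)$ can be covered by $N$ balls of radius $r$, then any subspace $V$ with $\dim V = k$ and $\inf_{v\in V\cap S_X}\|\LL^{(n)}_\om v\| > 2r$ forces $N \ge 2^k$ — pack $2^k$ well-separated unit vectors into $V\cap S_X$ whose images are pairwise more than $2r$ apart, so they land in distinct balls. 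Taking $r$ just above $\ic(\LL^{(n)}_\om) \approx e^{n\ka(\mc R)}$ and using $F_k(\LL^{(n)}_\om) \approx e^{n\mu_k}$, if $\mu_k > \ka(\mc R)$ then $N$ must grow exponentially; but for fixed $\rho$, a uniform bound on $N$ at radius $e^{n\rho}$... I would instead fix $k$ large and observe: for $k$ exponents above $\rho$, one gets $2^k \le \ic$-type count, and since only finitely many disjoint exponential rates can occur below a fixed ceiling (the count of exponents above $\rho$ is bounded by $\limsup \tfrac1{n}\log(\#\text{balls of radius }e^{n\rho})$, which is finite because $\rho > \ka(\mc R)$), finiteness follows. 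The cleanest route: show $\mu_k \le \ka(\mc R)$ for all $k$ large by comparing $F_k$ to $\ic$, giving that the exceptional exponents (those $>\ka(\mc R)$) are finite in number, and a fortiori those exceeding any $\rho > \ka(\mc R)$.

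For the third bullet, invertibility of $\sig$ lets us form $\mc R^*$, and $\LL^{*(n)}_{\sig^n\om} = (\LL^{(n)}_\om)^*$. By Lemma~\ref{lem:primaldual}, $D_k(\LL^{(n)}_\om)$ and $D_k((\LL^{(n)}_\om)^*)$ agree up to the multiplicative constants $c_k, C_k$ independent of $n$; taking $\tfrac1n\log$ and letting $n\to\infty$, the constants disappear, so $\Delta_k(\mc R) = \Delta_k(\mc R^*)$ for all $k$ (after checking that the shift by $\sig^n$ in the index is immaterial, which follows since $\om \mapsto \Delta_k$ is a.e.\ constant by ergodicity). Hence $\mu_k(\mc R) = \Delta_k(\mc R) - \Delta_{k-1}(\mc R) = \Delta_k(\mc R^*) - \Delta_{k-1}(\mc R^*) = \mu_k(\mc R^*)$.

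The main obstacle I anticipate is the second bullet: making the packing/covering argument quantitatively precise, in particular getting the factor-of-2 comparison between the separation of image vectors and the ball radius right, and correctly passing from "many well-separated vectors in a single expanding subspace" to a lower bound on the \emph{number} of balls (hence an upper bound on the number of exceptional exponents) — one must be careful that the $D_m$-type losses in Lemma~\ref{lem:ExponentsOfReducedCocycle}, with the $\sqrt m$ projection-norm constants, remain subexponential in $n$ so they wash out in the limit. The other two bullets are essentially immediate from the already-established volume comparisons.
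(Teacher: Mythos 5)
Your overall strategy matches the paper's, but there are two real issues and one misconception.

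\textbf{Bullet 1.} You assert that $F_k(T)\ge F_{k+1}(T)$ is ``not automatic,'' but in fact it \emph{is} immediate from the definition: given any $(k+1)$-dimensional $V$, restrict to any $k$-dimensional $W\subset V$; the infimum of $\|Tv\|$ over $W\cap S_X$ is taken over a smaller set and so is at least $\inf_{v\in V\cap S_X}\|Tv\|$, whence $F_k(T)\ge F_{k+1}(T)$. This is exactly what the paper uses, with no constants to track. Your fallback route via Lemma~\ref{lem:genseqs} does reach the same conclusion after taking $\tfrac1n\log$ and letting $n\to\infty$, so the end result is fine, but the premise that motivated the detour is wrong.

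\textbf{Bullet 2.} Here there is a genuine gap. You write that the number of exponents above $\rho$ ``is bounded by $\limsup \tfrac1n\log(\#\text{balls of radius }e^{n\rho})$, which is finite because $\rho>\ka(\mc R)$.'' But the definition of $\ic$ controls only the \emph{radius} of an admissible cover, not the \emph{number} of balls: $\ic(\LL^{(n)}_\omega)<e^{n\rho}$ just says finitely many balls of radius $e^{n\rho}$ suffice, with no quantitative bound. Establishing that this number grows at most like $e^{rn}$ for some fixed $r$ is precisely the non-trivial part of the paper's proof: one chooses $\ka<\alpha<\beta<\rho$, uses the subadditive ergodic theorem to find a block length $L$ for which a good set $G$ (where $\LL^{(L)}_\omega B_X$ is covered by $e^{rL}$ balls of radius $e^{\alpha L}$) has probability $>1-\delta$, and then propagates the cover along the orbit by splitting into good blocks (where one multiplies the ball count by $e^{rL}$ and shrinks radii by $e^{\alpha L}$) and rare bad singleton steps (where one merely inflates radii by $\|\LL_{\sigma^i\omega}\|$, using integrability of $\log\|\LL_\omega\|$ to keep the cumulative inflation subexponential). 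Only after this does the packing argument on an expanding $k$-dimensional subspace yield $k\le r/(\rho-\beta)$. Your packing idea is correct in spirit, but without the block argument you have no exponential bound on the number of balls to compare against.

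\textbf{Bullet 3.} Your use of Lemma~\ref{lem:primaldual} is right, and you correctly identify the base-point shift $\LL^{*(n)}_{\sigma^n\omega}=(\LL^{(n)}_\omega)^*$ as the thing needing care. However, your justification that the shift ``is immaterial, which follows since $\omega\mapsto\Delta_k$ is a.e.\ constant by ergodicity'' is not adequate: a.e.\ constancy of the limit does not by itself show $g_n(\sigma^{-n}\omega)/n\to A$ when the base point moves with $n$. One needs the subadditive structure; the paper cites the result (Lemma~8.2 of \cite{FLQ1}) that if $(f_n)$ is subadditive and $f_n(\omega)/n\to A$ a.e., then $f_n(\sigma^{-n}\omega)/n\to A$ a.e. Either cite that or reprove it; ``ergodicity'' alone does not close the gap.
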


\begin{proof}
  That the $\mu_i$ are
  decreasing follows from Lemma \ref{lem:expDefn} and
  the observation that $F_k(T)\le F_{k-1}(T)$.
  That the system and its dual have the same exponents follows from
  Lemma \ref{lem:primaldual} together with the simple result (in
  \cite[Lemma 8.2]{FLQ1}) that if $(f_n)$ is sub-additive and
  satisfies $f_n(\omega)/n\to A$ almost everywhere, then one has
  $f_n(\sigma^{-n}\omega)/n\to A$ also.

  It remains to show that for $\rho>\kappa$, the system has at most
  finitely many exponents that exceed $\rho$.  Let
  $\kappa<\alpha<\beta<\rho$. Since $\log\|\LL_\omega\|$ is
  integrable, there exists a $0<\delta<(\beta-\alpha)/2|\alpha|$ such
  that if $\mathbb P(E)<\delta$, then $\int_E
  \log^+\|\LL_\omega\|\,d\PP(\omega)<(\beta-\alpha)/2$.  By the
  sub-additive ergodic theorem, there exists $L>0$ such that
  $\PP(\ic(\LL^{(L)}_\omega)\ge e^{\alpha L})<\delta/2$.  If
  $\ic(\LL^{(L)}_\omega)< e^{\alpha L}$, then by definition,
  $\LL^{(L)}_\omega B_X$ may be covered by finitely many balls of size
  $e^{\alpha L}$. By linearity, if $\ic(A)=\zeta$, 
  one sees that if $B$ is a ball with arbitrary centre and radius $\rho$, then $A(B)$ 
  may be covered by finitely many balls of size $\zeta \rho$.
  
  Let $r$ be chosen large enough so that
  $\PP(G)>1-\delta$, where $G$ (the good set) is defined by
$$
G=\{\omega:\LL^{(L)}_\omega B_X\text{ may be covered with $e^{rL}$
  balls of size $e^{\alpha L}$}\}.
$$
We split the orbit of $\omega$ into blocks: if $\sigma^i\omega\in G$,
then the block length is $L$; otherwise, if $\sigma^i\omega$ is bad,
we take a block of length $1$. Consider the following iterative process:
start with a ball of radius $\rho_0=1$. Then look at the current iterate of
$\omega$, $\sigma^i\omega$, and suppose that $\LL^{(i)}_\om B_X$ is covered 
by $N_i$ balls of radius $\rho_i$. If $\sigma^i\om\in G$, then $\LL^{(i+L)}_\om
B_X$ is covered by at most $N_{i+L}=N_ie^{rL}$ balls of radius $\rho_{i+L}=
e^{\alpha L}\rho_i$ and the new iterate is $\sigma^{i+L}\om$. 
If $\sigma^i\om\not\in G$,
then $\LL^{(i+1)}_\om B_X$ is covered by at most $N_{i+1}=N_i$ balls of radius 
$\rho_{i+1}=\|\LL_{\sigma^i\om}\|\rho_i$ and the new iterate is $\sig^{i+1}\om$.

We claim that for almost all $\omega$, for sufficiently large $N$,
$\LL^{(N)}_\omega(B_X)$ is covered by at most $e^{rN}$ balls of size
$e^{\beta N}$. Indeed, given $\omega$, let $n_0$ be chosen such that 
for all $N\ge n_0$, one has $\sum_{i=0}^{N-1}
\mathbf 1_{G^c}(\sig^i\om)\log^+\|\LL_{\sig^i\om}\|<(\beta-\alpha)N/2$. 
If $\alpha\ge 0$, then for large $N$, through
the good steps, the balls are inflated by a factor at most $e^{\alpha
  N}$. If $\alpha<0$, then combining the good
blocks, the balls are scaled by a factor of $e^{\alpha(1-\delta)N}
<e^{(\alpha+\beta)N/2}$ or
smaller. In both cases, we see that overall, balls are 
scaled by at most $e^{\beta N}$. The splitting only takes place
in the good blocks, and yields at most $e^{rN}$ balls.

Now suppose that $\mu_k> \rho$. For almost all $\omega$, we have that
for all large $N$, $D_k(\LL_\omega^{(N)})>e^{kN\rho}$.  Fix
such an $N$, and suppose that $x_1,\ldots,x_k$ belong to $S_X$ 
and have the property
$\prod_{i\le k}D_i>e^{kN\rho}$ where
$$
D_i=d(\LL^{(N)}_\omega x_i,\lin(\{\LL^{(N)}_\omega x_j\colon j<i\})).
$$

Let $T_i=\{0,1,\ldots,\lfloor D_i/(2ke^{\beta N})\rfloor\}$ and notice
that $|T_1\times \cdots\times T_k|\ge e^{kN(\rho-\beta)}/(2k)^k$.  For
$(j_1,\ldots,j_k)\in T_1\times\cdots\times T_k$, define
$$
y_{j_1,\ldots,j_k}=\sum_{i=1}^k \frac {2j_ie^{\beta
    N}}{D_i}\LL^{(N)}_\omega x_i.
$$
It is not hard to see that all of these points belong to the image of
the unit ball of $X$ under $\LL^{(N)}_\omega$. Further, from the
definition of $D_i$, one can check that these points are mutually
separated by at least $2e^{\beta N}$, so that one requires at least
$e^{kN(\rho-\beta)}/(2k)^k$ balls to cover $\LL^{(N)}_\omega(B_X)$.

Hence we obtain
$$
\frac {e^{kN(\rho-\beta)}} {(2k)^k}\le e^{rN}.
$$
Since this holds for all large $N$, we deduce $k\le r/(\rho-\beta)$ as
required.
\end{proof}

\begin{lem}[Measurability II]\label{lem:grassmeas}
  Suppose that $X$ is a Banach space with separable dual.
  Suppose further that $\mathcal R$ is an ergodic random dynamical
  system acting on $X$.

  Assume there exist $\lambda'>\lambda\in\R$ and $d\in\mathbb N$ such that
  for $\PP$-almost every $\omega$, there is a closed $d$-codimensional
  subspace $V(\omega)$ of $X$ such that:
  \begin{enumerate}
  \item for all $v\in V(\omega)$,
  $\limsup_{n\to\infty} \frac 1n\log\|\mathcal L^{(n)}_\omega
  v\|\le\lambda$; and
  \item for each $a>0$ and $\ep>0$, there is an $n_0$ such that
  for $v\in S_X$ satisfying $d(v,V(\omega))>a$, one has
    $\|\mathcal L^{(n)}_\omega v\|\ge e^{n(\lambda'-\ep)}$
    for all $n\ge n_0$.
  \end{enumerate}
   Then $\omega\mapsto V(\omega)$ is measurable.
\end{lem}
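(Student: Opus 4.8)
The plan is to give, for $\PP$-a.e.\ $\om$, a description of $V(\om)$ purely in terms of the growth of the cocycle that makes the \emph{graph} of $\om\mapsto V(\om)$ measurable, and then to upgrade this to measurability of the subspace-valued map. We may assume $d\ge1$, since for $d=0$ the map $\om\mapsto V(\om)=X$ is constant. First I would record that $(\om,v)\mapsto\LL^{(n)}_\om v$ is jointly measurable from $\Om\times X$ to $X$: for fixed $v$ it is strongly measurable, for fixed $\om$ it is continuous in $v$, and $X$ is separable, so it is a Carath\'eodory map and therefore jointly measurable; joint measurability is then preserved by the compositions defining $\LL^{(n)}_\om$. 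Hence $(\om,v)\mapsto\|\LL^{(n)}_\om v\|$ is jointly measurable.

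The structural point is that, for $\PP$-a.e.\ $\om$ and every $v\in X$,
\[
v\in V(\om)\quad\Longleftrightarrow\quad\limsup_{n\to\infty}\tfrac1n\log\|\LL^{(n)}_\om v\|\le\lambda.
\]
The forward implication is the first hypothesis. For the converse, if $v\notin V(\om)$ then $d(v,V(\om))>0$ since $V(\om)$ is closed, and the second hypothesis gives $\|\LL^{(n)}_\om v\|\ge d(v,V(\om))e^{n(\lambda'-\ep)}$ for all large $n$, so $\liminf_n\tfrac1n\log\|\LL^{(n)}_\om v\|\ge\lambda'-\ep$ for every $\ep>0$, whence the right side fails. (With the convention $\log 0=-\infty$, vectors eventually annihilated by the cocycle get classified into $V(\om)$, consistently with the dichotomy, so injectivity plays no role.) It follows that $\Gamma:=\{(\om,v):v\in V(\om)\}$ coincides, off a $\PP$-null set, with $\bigcap_{k\ge1}\bigcup_{N\ge1}\bigcap_{n\ge N}\{(\om,v):\|\LL^{(n)}_\om v\|\le e^{n(\lambda+1/k)}\}$, which lies in the product $\sigma$-algebra.

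The step I expect to be the main obstacle is passing from measurability of $\Gamma$ to measurability of the functions $\om\mapsto d(v,V(\om))$ and $\om\mapsto d(v,V(\om)\cap S_X)$ for each fixed $v\in X$. One writes, for instance, $\{\om:d(v,V(\om))<r\}$ as the projection onto $\Om$ of $\Gamma\cap\{(\om,u):\|v-u\|<r\}$, and applies the measurable projection theorem (over the $\PP$-completion of $\mathcal F$), which is legitimate because $X$ is a Polish space. This is the one place where describing $V(\om)$ by a measurable pointwise condition does not by itself suffice: a measurable selection/projection argument is needed, and this is precisely where separability of $X$ — which we have because $X^*$ is separable — is used.

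With the distances $\om\mapsto d(v,V(\om))$ and $\om\mapsto d(v,V(\om)\cap S_X)$ known measurable for every fixed $v$, the hypotheses of the construction in Remark~\ref{rmk:diffDomain} are satisfied, producing measurable maps $\om\mapsto u_j(\om)\in X$ with $(u_j(\om))_{j\ge1}$ dense in $V(\om)\cap S_X$ for a.e.\ $\om$ (equivalently, a Castaing-type representation of the closed-valued multifunction $\om\mapsto V(\om)$, whose measurability also follows from that of $\Gamma$). Then $\{q\,u_j(\om):j\ge1,\ q\in[0,1]\cap\Q\}$ is dense in $V(\om)\cap B_X$, so $\om\mapsto d(x,V(\om)\cap B_X)=\inf_{j,q}\|x-q\,u_j(\om)\|$ is measurable for each fixed $x$. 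Finally, using that $\mathcal G^d(X)$ is separable (since $X^*$, hence $X$, is) and that its metric is $d(Y,Y')=d_H(Y\cap B_X,Y'\cap B_X)$, I would compute $\om\mapsto d(V(\om),Y_0)$ for each fixed $Y_0\in\mathcal G^d(X)$ by taking suprema of these measurable distance functions over the dense sequence $(q\,u_j(\om))$ and over a fixed countable dense subset of $Y_0\cap B_X$; this is measurable, so preimages of balls in $\mathcal G^d(X)$ are measurable and $\om\mapsto V(\om)$ is measurable. The only non-routine ingredient is the measurable selection invoked in the third paragraph.
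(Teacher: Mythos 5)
Your proof is correct, but it takes a genuinely different route to measurability than the paper does. Both arguments share the same structural input: the dichotomy
$v\in V(\om)\iff\limsup_n\tfrac1n\log\|\LL^{(n)}_\om v\|\le\lambda$,
which follows from the two hypotheses exactly as you say. After that the two proofs diverge. You use this dichotomy to show that the graph $\Gamma=\{(\om,v):v\in V(\om)\}$ is product-measurable (modulo a null set), and then you invoke the measurable projection theorem over the $\PP$-completion to obtain measurability of $\om\mapsto d(v,V(\om))$ and $\om\mapsto d(v,V(\om)\cap S_X)$ for each fixed $v$; from there, the Castaing-type dense-selection construction of Remark~\ref{rmk:diffDomain} and the separability of $\mathcal G^d(X)$ finish the job. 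The paper instead avoids the projection theorem entirely: it builds an explicit countable base of neighbourhoods $N_{V;w_1,\ldots,w_d;\gamma}$ for the Borel $\sigma$-algebra on $\mathcal G^d(X)$, and then characterizes ``$V(\om)\in N$'' for each such $N$ as a countable Boolean combination of sets of the form $\{\om:\|\LL_\om^{(n)}(v_j-\sum_i a_i^j w_i)\|\le e^{(\lambda+\epsilon)n}\}$, each of which is measurable by strong measurability of the generator. The price of the paper's approach is a mildly intricate ``if and only if'' verification that membership in $N$ is captured by this countable condition; the payoff is that no appeal to analytic sets, universal measurability, or completion of $\mathcal F$ is needed — this is precisely what the paper means by proving measurability ``in an elementary fashion''. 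Your approach is shorter and more conceptual once the projection theorem is accepted, and you correctly flag it as the one genuinely non-elementary step. One small remark: the construction in Remark~\ref{rmk:diffDomain} is stated under the hypothesis that $\om\mapsto V(\om)$ is already known to be measurable; you are using it in the reverse direction, starting only from measurability of $\om\mapsto d(v,V(\om)\cap S_X)$ for fixed $v$. That is legitimate — the construction in the remark only ever uses those distance functions — but it is worth being explicit that you are extracting this intermediate fact rather than citing the remark wholesale.
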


\begin{proof}
  Given $V\in \mathcal G^d(X)$, fix  $w_1,\ldots,w_d$ such that
  $V\oplus\lin(w_1,\ldots,w_d)=X$ and define a neighbourhood of $V$ by
\begin{align*}
N_{V,k}= \{&U\in \mathcal G^d(X)\colon
U\cap \lin(w_1,\ldots,w_d)=\{0\};\\
&\|\proj_{\lin(w_i)\parallel U\oplus \lin(\{w_j\colon j\ne
  i\})}|_V\|\le \tfrac1k\text{ for $1\le i\le d$}\}.
\end{align*}

Since $\mathcal G^d(X)$ is separable, fix a countable
sequence $(V_n)$ of subspaces, dense in $\mathcal G^d(X)$.
Using Lemma \ref{lem:Grass} items \eqref{it:projbound2} and
\eqref{it:symmclose}, there exists $K>0$ such that
for each $V\in \mathcal G^d(X)$, there exists $W\in \mathcal G_d(X)$
such that $\|\proj_{W\parallel V}\|\le K$,
$\|\proj_{V\parallel W}\|\le K$. If $w_1,\ldots,w_d$ is an Auerbach
basis for $W$, then $\|\proj_{\lin(w_{i})
\parallel\lin(\{w_{j}\colon j\ne i\})}|_{W}\|\le 1$.
For each $V_n$, let $W_n$ be a subspace satisfying the above inequalities
and let $w_{n,1},\ldots,w_{n,d}$ be an Auerbach basis. 
Let $P_{n,i}$ denote $\proj_{\lin(w_{n,i})\parallel V(\om)\oplus 
\lin(\{w_{n,j}\colon j\ne i\})}$.

We obtain a countable collection of basic sets $(N_{V_n,k})$
which generate the Borel $\sigma$-algebra on $\mathcal G^d(X)$. To see
this, we claim that for each $V\in \mathcal{G}^d(X)$ and each open set $O$ containing 
$V$, there are $n$ and $k$ such that $V\in N_{V_n,k}\subset O$. 
Then each open set is the union of the basic sets that it contains.

Given $V\in \mathcal G^d(X)$ and an open set $O$ containing it, 
let $B_r(V)\subset O$. Let $k>4Kd/r$ and $\delta=\min(1/(2K),1/(4kK),r/2)$. 
Let $n$ be such that $d_H(V\cap S_X,V_n\cap S_X)<\delta$. 
Let $v\in V\cap S_X$ and $w\in W_n\cap S_X$. There exists $v'\in V_n\cap S_X$
such that $\|v-v'\|<\delta$. By Lemma \ref{lem:Grass}\eqref{it:projbound},
$\|w-v'\|\ge 1/K$, so that $\|w-v\|\ge 
1/(2K)$.
Hence $V\cap W_n=\{0\}$ and $\|\proj_{W_n\parallel V}\|\le 4K$. 
Now given $v'\in V_n\cap S_X$, there exists $v\in V\cap S_X$ and $x\in X$ with 
$v'=v+x$ and $\|x\|<\delta$. We have 
$\|P_{n,i}(v')\| =\|P_{n,i}(x)\| =
\|P_{n,i}\circ \proj_{W_n\parallel V}(x)\|\le 4K\delta$, 
so that $V\in N_{V_n,k}$. Finally let $U\in N_{V_n,k}$ and
let $v'\in V_n\cap S_X$. By definition, we have
$\|P_{n,i}(v')\|\le \tfrac 1k$ for each $i$, so that
$\|\proj_{W_n\parallel U}(v')\|\le \tfrac dk$. In particular, there
exists $u\in U$ such that $\|u-v'\|\le \tfrac dk$ and hence there is
$u\in U\cap S_X$ such that $\|u-v'\|\le \tfrac {2d}k$. Using Lemma 
\ref{lem:Grass}\eqref{it:symmclose}, we
deduce $d_H(U\cap S_X,V_n\cap S_X)\le \frac{2Kd}k$, so that
$d_H(U\cap S_X,V\cap S_X)\le \delta+\frac{2Kd}k<r$, showing
$N_{V_n,k}\subset O$. 

Hence to show the desired measurability, it suffices to show that for
each $N =N_{V_n,k}$, $\{\omega\colon V(\omega)\in
N\}$ is measurable. First, $\{U\colon U\cap W_n=\{0\}\}$
is an open set, so that $\{\omega\colon V(\omega)\cap W_n=\{0\}\}$
is measurable. 

Fix a dense set $v_1,v_2,\ldots$ in the unit sphere of
$V_n$.  We claim that for those $\omega$ lying in the set $G$ of full measure
on which $\dim V(\omega)=d$ and hypotheses (1) and (2) of the Lemma hold,
we have that $V(\omega)$ lies in $N$ if and only if the following condition holds:

\begin{quote}
For each rational $\epsilon>0$ and each $j\in \N$, there is $m_0>0$ such that for
each $m\ge m_0$, there are rationals
$a^j_1,\ldots,a^j_d$ in $[-\frac1k,\frac 1k]$ such that $
\|\mathcal L_\omega^{(m)}(v_j-\sum_{i=1}^da^j_iw_{n,i})\| \le
e^{(\lambda+\epsilon)m}$.
\end{quote}

To see the `only if' direction, suppose that $V(\om)\in N$. 
Now given $v_j\in V_n\cap S_X$, by definition of $N$, there are $b_1,\ldots,b_d$
in $[-\frac1k,\frac 1k]$ such that  
$v_j=v'+b_1w_{n,1}+\ldots+b_dw_{n,d}$ with $v'\in V(\om)$. Hence
$v'=v_j-\sum_{i=1}^d b_i w_{n,i}\in V(\om)$, and therefore
we have $\|\LL_\om^{(m)}v'\|< e^{(\lambda+\epsilon)m}$ for all
sufficiently large $m$. Now for any such $m$, one can take
$a_i$'s that are suitably close rational approximations to $b_i$
so that $\|\LL_\om^{(m)}(v_j-\sum_{i=1}^d a_iw_{n,i})\|\le e^{(\lambda+\epsilon)m}$. 

Conversely, suppose that $V(\om)\cap \lin W_n=\{0\}$, but 
$V(\om)\not\in N$. Then there exists a $v\in V_n\cap S_X$ and an $i$
such that $\|P_{n,i}(v)\|
>\frac 1k$. By continuity, there exists a $v_j$ satisfying the same property. 
Let $\delta=\|P_{n,i}(v_j)\|-\frac 1k$. Then 
$\|P_{n,i}(v-\sum_{l=1}^d
a_lw_l)\|\ge\delta$ for all $(a_l)_{l=1}^d\in [-\tfrac 1k,\tfrac1k]^d$.
By hypothesis, we now see that the condition is not satisfied. 


Since this condition is obtained by taking countable unions and
intersections of measurable sets, the measurability of $G\cap \{\omega\colon
V(\omega)\in N\}$ is demonstrated. Using completeness of $(\Om,\mathcal F,\mathbb P)$,
we deduce that $\{\om\colon V(\om)\in N\}$ is measurable, so that $\om\mapsto
V(\om)$ is measurable as required. 
\end{proof}

\begin{lem}\label{lem:tempered}
  Let $\sig$ be an ergodic measure-preserving transformation of a
  probability space $(\Om,\PP)$. Let $g$ be a non-negative measurable
  function and let $h\geq0$ be integrable.  Suppose further that
  $g(\om)\le h(\om)+g(\sig(\om))$, $\PP$-a.e.  Then $g$ is tempered;
  that is $\lim_{n\to\infty}g(\sig^n\om)/n=0$, $\PP$-a.e.
\end{lem}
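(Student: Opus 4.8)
The plan is to prove that $\limsup_{n\to\infty}\tfrac1n g(\sig^n\om)=0$ for $\PP$-a.e.\ $\om$; since $g\ge 0$, this immediately gives $g(\sig^n\om)/n\to 0$. First I would iterate the hypothesis along an orbit: for $\PP$-a.e.\ $\om$ and all $0\le n\le m$,
\begin{equation*}
  g(\sig^m\om)\ \ge\ g(\sig^n\om)-\sum_{k=n}^{m-1}h(\sig^k\om)
\end{equation*}
(equivalently, $n\mapsto g(\sig^n\om)+\sum_{k<n}h(\sig^k\om)$ is non-decreasing). The useful content is that a single large value $g(\sig^n\om)$ can only decay at the rate of the Birkhoff sums of $h$. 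I would also note that we may assume $g<\infty$ $\PP$-a.e., since $\{g=\infty\}$ is forward invariant modulo null sets and hence $\PP$-trivial by ergodicity.

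Next, fix a full-measure set $\Om_0$ on which simultaneously: (a) $\tfrac1N\sum_{k<N}h(\sig^k\om)\to\bar h:=\int_\Om h\,d\PP$ (pointwise ergodic theorem, using ergodicity and $h\in L^1$); (b) for every $M\in\N$, $\tfrac1N\,\#\{0\le k<N:g(\sig^k\om)>M\}\to p_M:=\PP(g>M)$, where $p_M\downarrow 0$ as $M\to\infty$; and (c) $g(\sig^k\om)\le h(\sig^k\om)+g(\sig^{k+1}\om)$ for all $k\ge 0$. Suppose for contradiction that some $\om\in\Om_0$ has $L:=\limsup_n\tfrac1n g(\sig^n\om)>0$. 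Pick a finite $b\in(0,L)$, then $\del\in(0,1]$ and $\ep>0$ with $\bar h\del+3\ep<b/2$. There are arbitrarily large $n$ with $g(\sig^n\om)>bn$; once such an $n$ is large enough that $\left|\tfrac1N\sum_{k<N}h(\sig^k\om)-\bar h\right|<\ep$ for all $N\ge n$, the displayed inequality combined with (a) yields $g(\sig^{n+m}\om)>\tfrac b2 n$ for every $0\le m\le\del n$, and hence $g(\sig^k\om)>\tfrac b4 k$ for every integer $k\in[n,(1+\del)n]$.

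To finish, each such $n$ contributes a block of $\gtrsim\del n$ consecutive times $k$ with $g(\sig^k\om)>\tfrac b4 k$, so the set $S:=\{k\ge0:g(\sig^k\om)>\tfrac b4 k\}$ has upper density at least $\rho:=\del/(1+\del)>0$. Fixing $M$, for every $k\in S$ with $k>4M/b$ we have $\sig^k\om\in\{g>M\}$, and deleting finitely many indices leaves the upper density unchanged, so $\{k:\sig^k\om\in\{g>M\}\}$ has upper density $\ge\rho$; by (b) this upper density equals $p_M$, whence $p_M\ge\rho$ for all $M$, contradicting $p_M\downarrow 0$. Thus $L=0$ for every $\om\in\Om_0$, as needed.

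The main obstacle — and the reason the statement is not purely routine — is that iterating the hypothesis only ever bounds future values $g(\sig^n\om)$ from below, never from above, so there is no telescoping or direct Birkhoff argument controlling $\tfrac1n g(\sig^n\om)$ (note $g$ need not be integrable). The workaround is to exploit that one linearly-large value propagates into a whole block of linearly-large values, turning this into a positive density of times at which $g$ exceeds any prescribed level $M$, and then to contradict $\PP(g>M)\to0$ using the ergodic theorem for $\mathbf 1_{\{g>M\}}$.
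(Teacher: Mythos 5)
Your proof is correct, and it takes a genuinely different route from the paper's. The paper argues \emph{directly}: it uses the maximal ergodic theorem to produce a positive-measure set $B_1$ on which every forward Birkhoff sum of $h$ over $n$ steps is at most $nK$ (for a fixed $K>\int h$), and a positive-measure set $B_2=\{g<M\}$; by Birkhoff recurrence it finds, for each large $n$, a time $j\le n$ with $\sig^j\om\in B_1$ and a time $j'>n$ with $\sig^{j'}\om\in B_2$, both within exponential windows so that $j'-j=O(\ep n)$; then iterating the hypothesis from $n$ forward to $j'$ and enlarging the sum to start at $j$ gives $g(\sig^n\om)\le K(j'-j)+M$, whence $\limsup g(\sig^n\om)/n\lesssim\ep$. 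Your argument instead uses the same iterated inequality in contrapositive form to show that one linearly-large value of $g$ must persist over a block of proportional length, converting $\limsup g(\sig^n\om)/n>0$ into positive upper density of the level set $\{g>M\}$ along the orbit for every $M$, and then invokes the Birkhoff theorem for the countable family of indicators $\mathbf 1_{\{g>M\}}$ to contradict $\PP(g>M)\to 0$. Both proofs lean on the key fact that the hypothesis only controls $g$ ``into the future,'' so no telescoping is available; the paper's version is shorter and needs Birkhoff only for $h$ plus recurrence (plus the maximal ergodic theorem), while yours uses Birkhoff for $h$ and for countably many indicator functions and runs by contradiction, but it avoids the maximal ergodic theorem and the slightly delicate choice of exponential windows. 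Both implicitly assume $g<\infty$ on a set of positive measure (the paper when it picks $M$ with $\PP(g<M)>0$, you when you invoke $p_M\downarrow 0$); your explicit remark that $\{g=\infty\}$ is $\PP$-trivial is a useful clarification, though strictly speaking the conclusion fails if that trivial set has full measure, so the right reading is that the lemma tacitly assumes $g$ finite a.e.
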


A proof of this lemma appears in Ma\~n\'e's paper \cite{Mane}. 

\begin{proof}
  Let $\epsilon>0$ and let $K>\int h$. By the maximal ergodic
  theorem, $B_1=\{\om\colon h(\om)+\ldots+h(\sigma^{n-1}\om)<nK,
  \text{ for all $n$}\}$ has positive measure. 
  Let $M$ be such that $B_2:=\{\om\colon g(\om)<M\}$
  has positive measure.
  As a consequence of the
  Birkhoff ergodic theorem, for any measurable set $B$ with
  $\PP(B)>0$, for $\PP$-a.e. $\om$, for all sufficiently large $k$,
  there exists $j\in [(1+\ep)^k,(1+\ep)^{k+1})$ such that
  $\sig^j(\om)\in B$. Now for $\om\in\Om$, let $k_0$ be such that for
  all $k\ge k_0$, there exist $j\in [(1+\ep)^k,(1+\ep)^{k+1})$ such

  that $\sig^j\om\in B_1$ and $j'\in((1+\ep)^{k+2},(1+\ep)^{k+3})$
  such that $\sig^{j'}\om\in B_2$.  If $n>(1+\ep)^{k_0+1}$, then $n\in
  [(1+\ep)^{k+1},(1+\ep)^{k+2})$ for some $k\ge k_0$.
  Let $j\in
  [(1+\ep)^k,(1+\ep)^{k+1})$
 and $j'\in [(1+\ep)^{k+2},(1+\ep)^{k+3})$
  be as above. Then $g(\sig^n\om)\le \sum_{k=n}^{j'-1}h(\sigma^k\om)
   + g(\sig^{j'}\om) \le \sum_{k=j}^{j'-1}h(\sig^k\om)
  +g(\sig^{j'}\om)\le K(j'-j)+M$, so that $\limsup
  g(\sig^n\om)/n\le 4\ep$. Since $\ep$ is arbitrary, the conclusion
  follows.
\end{proof}

\begin{thm}[Multiplicative ergodic theorem: the Oseledets filtration]
\label{thm:Osel}
Let $\mathcal R$ be an ergodic random dynamical system acting on a
Banach space $X$ with separable dual. Suppose that
$\kappa(\mathcal R)<\lambda(\mathcal R)$. Then there exist $1\le
r\le\infty$
  \footnote{If $r=\infty$, the conclusions are replaced
    by: $\lambda(\mathcal R)=\lambda_1>\lambda_2>\ldots\to \kappa(\mathcal R)$,
    $\mult_1,\mult_2,\ldots\in \N$ and $X=V_1(\om)\supset V_2(\om)
    \supset \dots$; $V_\infty(\om)=\bigcap V_i(\om)$.}  and:
  \begin{itemize}
  \item a sequence of exceptional Lyapunov exponents $\lambda(\mathcal
    R)=\lambda_1>\lambda_2>\ldots>\lambda_r>\kappa(\mathcal R)$;
  \item a sequence $\mult_1,\mult_2,\ldots,\mult_r$ of positive
    integers; and
  \item a measurable filtration of closed subspaces, $X=V_1(\om)\supset
    V_2(\om) \supset \dots \supset V_r(\omega)\supset V_\infty(\om)$,
    with the equivariance property $\LL_\om(V_i(\om))\subset V_i(\sig(\om))$
    for each $i$. 
\end{itemize}
such that for $\PP$-a.e.\ $\omega$,  $\codim V_\ell(\omega)=\mult_1+\dots+\mult_{\ell-1}$;
for all $v\in V_\ell(\omega)\setminus V_{\ell+1}(\omega)$, one has
$\lim\frac 1n\log\|\LL^{(n)}_\omega v\|= \lambda_\ell$; and for 
$v\in V_\infty(\omega)$, $\limsup\frac 1n\log\|\LL^{(n)}_\omega v\|\le \kappa(\mathcal R)$.
%
\end{thm}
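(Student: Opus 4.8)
The plan is to construct the filtration from the outside in, peeling off one Lyapunov exponent at a time; the volume machinery of Section~\ref{sec:volume} controls the dimension counts, the separability of $X^*$ supplies a compactness argument, and Lemma~\ref{lem:grassmeas} is invoked only at the very end to obtain measurability. First I would read off the exponents: by Lemma~\ref{lem:expDefn} the limits $\mu_k=\lim_n\tfrac1n\log F_k(\LL^{(n)}_\om)$ exist and, by Theorem~\ref{thm:summ}, form a non-increasing sequence with only finitely many terms above any $\rho>\ka(\mc R)$; since $\ka(\mc R)<\lam(\mc R)=\mu_1$, the distinct values of the $\mu_k$ lying strictly above $\ka(\mc R)$ form a list $\lam_1>\lam_2>\cdots$ that is either finite (ending at some $\lam_r$) or decreases to $\ka(\mc R)$. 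Let $\mult_\ell$ be the multiplicity of $\lam_\ell$ among the $\mu_k$ and $d_\ell=\mult_1+\cdots+\mult_{\ell-1}$; note $\mult_1<\infty$, since infinitely many $\mu_k$ equal to $\lam_1$ would force $\lam_1\le\ka(\mc R)$.

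The heart of the proof is a single construction, which I would phrase for a general ergodic cocycle $\mc R'$ — possibly acting on a measurable equivariant family of finite-codimensional domains, in the sense of Remark~\ref{rmk:diffDomain} — whose top Lyapunov exponent has finite multiplicity $\mult$, i.e.\ $\mu_1(\mc R')=\cdots=\mu_{\mult}(\mc R')>\mu_{\mult+1}(\mc R')$. The claim is that there is an equivariant family $W(\om)\in\mc G^{\mult}(X)$ and a complementary equivariant family $E(\om)$ of dimension $\mult$, with $X=W(\om)\oplus E(\om)$, such that for $\PP$-a.e.\ $\om$ every $v\in W(\om)$ satisfies $\limsup\tfrac1n\log\|\LL^{(n)}_\om v\|\le\mu_{\mult+1}(\mc R')$, while every $v\notin W(\om)$ satisfies $\|\LL^{(n)}_\om v\|\ge d(v,W(\om))\,e^{n(\mu_1(\mc R')-\ep)}$ for all large $n$. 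To build these spaces: for each $n$ apply Lemma~\ref{lem:genseqs} to $\LL^{(n)}_\om$ to get consistent sequences $(\theta^{(n)}_i)_i$ in $S_{X^*}$ and $(x^{(n)}_j)_j$ in $S_X$, and set $E^{(n)}(\om)=\lin(x^{(n)}_1,\ldots,x^{(n)}_{\mult})$ and $W^{(n)}(\om)=\bigcap_{i\le\mult}\ker(\theta^{(n)}_i\circ\LL^{(n)}_\om)$; these are complementary since the determinant in Lemma~\ref{lem:genseqs} is non-zero. A direct computation with the definition of $\vol$ and the lower determinant bound of Lemma~\ref{lem:genseqs} gives $\|\LL^{(n)}_\om|_{W^{(n)}(\om)}\|\le C_{\mult}\,D_{\mult+1}(\LL^{(n)}_\om)\big/\prod_{i\le\mult}F_i(\LL^{(n)}_\om)$, which by Lemma~\ref{lem:expDefn} and Corollary~\ref{cor:equiv} equals $e^{n(\mu_{\mult+1}(\mc R')+o(1))}$; dually, Lemma~\ref{lem:genseqs} gives $\inf_{e\in E^{(n)}(\om)\cap S_X}\|\LL^{(n)}_\om e\|\ge e^{n(\mu_1(\mc R')-o(1))}$. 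Using submultiplicativity (Lemma~\ref{lem:submult}), these two estimates and the gap $\mu_{\mult}(\mc R')>\mu_{\mult+1}(\mc R')$, I would show that $W^{(n)}(\om)$ and the projection onto $E^{(n)}(\om)$ are Cauchy at geometric rate $e^{-n(\mu_1(\mc R')-\mu_{\mult+1}(\mc R')-\ep)}$; the compactness needed to pass to the limit is supplied by weak-$*$ sequential compactness of $B_{X^*}$ (valid since $X$ is separable) together with completeness of $\mc G^{\mult}(X)$, yielding limits $W(\om):=\lim_n W^{(n)}(\om)$ and $E(\om):=\lim_n E^{(n)}(\om)$. Equivariance passes to the limit because $W^{(n)}(\om)$ is equivariant up to an error exponentially small in $n$. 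The upper bound inside $W(\om)$ follows from the bound on $\|\LL^{(n)}_\om|_{W^{(n)}(\om)}\|$ together with the fact that a unit vector of $W(\om)$ differs from $W^{(n)}(\om)$ by an $E^{(n)}(\om)$-component of size $e^{-n(\mu_1(\mc R')-\mu_{\mult+1}(\mc R')-\ep)}$, which $\LL^{(n)}_\om$ (of norm $e^{n(\mu_1(\mc R')+o(1))}$) cannot inflate beyond $e^{n(\mu_{\mult+1}(\mc R')+o(1))}$ — here it is essential that $\mu_1(\mc R')=\mu_{\mult}(\mc R')$, i.e.\ that $\mult$ is the full multiplicity of the top exponent. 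For the lower bound, $E(\om)$ inherits (from the $E^{(n)}(\om)$ estimate and the convergence rate) the uniform expansion $\|\LL^{(n)}_\om e\|\ge e^{n(\mu_1(\mc R')-\ep)}\|e\|$ for $e\in E(\om)$; writing $v=w+e$ with $w\in W(\om)$, $e\in E(\om)$, and using that the norms of the projections associated with $X=E(\om)\oplus W(\om)$ are tempered (Lemma~\ref{lem:tempered}), one gets $\|\LL^{(n)}_\om v\|\ge d(\LL^{(n)}_\om v,W(\sigma^n\om))=d(\LL^{(n)}_\om e,W(\sigma^n\om))\ge d(v,W(\om))\,e^{n(\mu_1(\mc R')-\ep)}$ for all large $n$.

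With this step in hand I would set $V_1(\om)=X$ and argue inductively: given an equivariant $V_\ell(\om)$ of codimension $d_\ell$ satisfying the stated dichotomy, apply the step to the restricted cocycle $\mc R|_{\mathscr V_\ell}$ of Remark~\ref{rmk:diffDomain}; Lemma~\ref{lem:ExponentsOfReducedCocycle}, together with the reverse volume bound obtained from the tempered transversality of the fast complement $E_1(\om)\oplus\cdots\oplus E_{\ell-1}(\om)$ assembled at earlier stages, identifies the exponents of $\mc R|_{\mathscr V_\ell}$ as $\lam_\ell>\lam_{\ell+1}>\cdots$ with multiplicities $\mult_\ell,\mult_{\ell+1},\ldots$, so its top exponent has finite multiplicity $\mult_\ell$ and the step produces $V_{\ell+1}(\om)$ of codimension $d_\ell+\mult_\ell=d_{\ell+1}$, equivariant, with rate $\le\lam_{\ell+1}$ inside and $\ge\lam_\ell$ outside. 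If the list terminates at $\lam_r$, one more application gives $V_\infty(\om):=V_{r+1}(\om)$ of codimension $d_{r+1}$, on which the rate is at most $\mu_{d_{r+1}+1}\le\ka(\mc R)$; if the list is infinite, take $V_\infty(\om)=\bigcap_\ell V_\ell(\om)$, which is closed and equivariant and on which the rate is at most $\lim_\ell\lam_\ell=\ka(\mc R)$. On $V_\ell(\om)\setminus V_{\ell+1}(\om)$ the dichotomies for $V_\ell$ and $V_{\ell+1}$ squeeze the rate to exactly $\lam_\ell$, and the $o(n)$ slack — so that $\limsup$ becomes $\lim$ and the $\ep$'s disappear — is absorbed by applying Lemma~\ref{lem:tempered} to an appropriate non-negative function satisfying its hypotheses. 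Finally, the dichotomy for each $V_\ell(\om)$ is exactly the hypothesis of Lemma~\ref{lem:grassmeas} with $\lam=\lam_\ell$, $\lam'=\lam_{\ell-1}$, $d=d_\ell$, so $\om\mapsto V_\ell(\om)$ is measurable; because measurability is established only at this point, none of the finite-$n$ objects $W^{(n)}(\om)$ need be handled measurably. I expect the single step to be the whole difficulty: proving that $W^{(n)}(\om)$ converges (where the spectral gap is consumed), controlling the limiting covectors via the weak-$*$ compactness of $B_{X^*}$ so that the codimension count is preserved, and then transferring both the upper bound inside $W(\om)$ and the lower bound outside it from the finite-time estimates to the limit; the rest is bookkeeping with the results of Section~\ref{sec:volume} and Lemmas~\ref{lem:tempered} and~\ref{lem:grassmeas}.
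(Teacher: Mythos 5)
Your overall template matches the paper's --- approximate slow spaces via the consistent sequences of Lemma~\ref{lem:genseqs}, convergence from the spectral gap, measurability via Lemma~\ref{lem:grassmeas} deferred to the end --- but you introduce, and then rely on, an object that does not exist in the non-invertible setting. You build $W(\om)$ together with a limiting fast complement $E(\om)=\lim_n E^{(n)}(\om)$ and assert both are equivariant; the equivariance of $E(\om)$ is then used essentially twice: first in the lower-bound estimate (where you need $\LL^{(n)}_\om e\in E(\sigma^n\om)$ so that tempered transversality of $E(\sigma^n\om)$ and $W(\sigma^n\om)$ controls $d(\LL^{(n)}_\om e,W(\sigma^n\om))$, and so that the projection norms for $X=W(\om)\oplus E(\om)$ satisfy the subadditive recursion of Lemma~\ref{lem:tempered}), and second in the inductive step, where ``tempered transversality of the fast complement $E_1(\om)\oplus\cdots\oplus E_{\ell-1}(\om)$'' is what supplies the reverse volume bound identifying the exponents of $\mathcal R|_{V_\ell}$. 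When $\sigma$ is non-invertible there is no equivariant fast complement: if $\sigma\om_1=\sigma\om_2=\om'$, equivariance would force $\LL_{\om_1}E(\om_1)$ and $\LL_{\om_2}E(\om_2)$ to coincide inside $E(\om')$, which generically fails since $E(\om_1)=E(\om_2)$ (both determined by the shared future) while $\LL_{\om_1}\neq\LL_{\om_2}$. You also give no argument that $E^{(n)}(\om)$ is Cauchy --- Lemma~\ref{lem:genseqs} does not pin down $(x_j^{(n)})$ uniquely, and the mechanism that makes $V^{(n)}_\ell$ Cauchy, namely the uniform slow bound \eqref{eq:unifexprate} inside $V^{(n)}_\ell$, has no counterpart on $Y^{(n)}_\ell$.

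The paper avoids forming any limiting fast space. For the lower bound off $V_{\ell+1}$ it decomposes the test vector against the $n$-dependent finite-time splitting $X=V^{(n)}_{\ell+1}(\om)\oplus Y^{(n)}_\ell(\om)$ and uses the uniform bounds of Lemma~\ref{lem:genseqs} and \eqref{eq:unifexprate} directly, so no convergent complement is needed. For the inductive identification of the exponents of $\mathcal R|_{V_{\ell}}$ it passes to the quotient cocycle on $Q_{\ell-1}(\om)=V_{\ell-1}(\om)/V_\ell(\om)$, which is canonically defined and equivariant, and proves the transversality estimate \eqref{eq:minang} and the quotient expansion \eqref{eq:quotexp} there; combined with Lemma~\ref{lem:ExponentsOfReducedCocycle} this pins down the restricted exponents without ever choosing a complement. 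Replacing your $E_\ell$'s by these two devices --- the $n$-dependent splittings for the growth bounds, and the quotient cocycle for the induction --- is the repair; an honest fast splitting appears only in Corollary~\ref{cor:splitting}, where $\sigma$ is invertible and the complement is produced by duality.
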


While the theorem is stated for ergodic random dynamical systems, a standard
application of ergodic decomposition allows one to deduce a version for non-ergodic
systems, in which constants are replaced by invariant functions. 

\begin{proof}
  Let $\mu_1\ge \mu_2\ge\ldots$
  be as in Lemma \ref{lem:expDefn}. Let
  $\lambda_1>\lambda_2>\ldots$ be the decreasing enumeration of the
  distinct $\mu$-values that exceed $\kappa(\mathcal R)$ (if this an
  infinite sequence, then Theorem \ref{thm:summ} establishes that
  $\lambda_i\to\kappa(\mathcal R)$).  The fact that $\lambda(\mathcal
  R)=\lambda_1$ is straightforward from the definitions.  Let
  $\mult_\ell$ be the number of times that $\lambda_\ell$ occurs in
  the sequence $(\mu_i)$ and let $\D_\ell=\mult_1+\ldots +\mult_\ell$,
  so that $\mu_{\D_{\ell-1}}=\lambda_{\ell-1}$ and
  $\mu_{\D_{\ell-1}+1}=\lambda_{\ell}$.

  We now turn to the construction of $V_\ell(\omega)$.  For a fixed
  $\omega$, let the sequences $(\theta^{(n)}_i)_{i\ge 1}$ and
  $(x^{(n)}_j)_{j\ge 1}$ be as guaranteed by Lemma \ref{lem:genseqs}
  for the operator $\mathcal L^{(n)}_\omega$.  We let
  $V_\ell^{(n)}(\omega)$ be
  $\lin((\LL^{(n)}_{\om})^*\theta^{(n)}_1,\ldots,
  (\LL^{(n)}_{\om})^*\theta^{(n)}_{\D_{\ell-1}})^\perp$, let
  $Y_\ell^{(n)}(\omega)$ be $\lin(\{x^{(n)}_j\colon j\le
  \D_\ell\})$. Thus, $X=V_\ell^{(n)}(\omega)\oplus
  Y_{\ell-1}^{(n)}(\omega)$.
  All of these depend on the choice of $\theta$'s and $x$'s.  No claim
  of uniqueness or measurability is made. 
  
  The space $V_\ell^{(n)}(\om)$ is
  an approximate slow space.

  The proof will go by the following steps:
  \begin{enumerate}[(a)]
  \item \label{it:nrate} For almost all $\omega$, for arbitrary $\epsilon>0$
    and for sufficiently large $n$, $\|\mathcal
    L^{(n)}_\omega x\|\le e^{(\lambda_\ell+\epsilon)n}\|x\|$ for all
    $x\in V_\ell^{(n)}(\omega)$;
  \item \label{it:Cauchy} $V_\ell^{(n)}(\omega)$ is a Cauchy sequence
    for almost all $\omega$ -- we define the limit to be
    $V_\ell(\omega)$;
  \item \label{it:equivar} The $V_\ell(\omega)$ are equivariant: $\mathcal L_\omega(V_\ell(\omega))\subseteq
    V_\ell(\sigma(\omega))$;
  \item \label{it:liminfrate} If $x\not\in V_{\ell+1}(\omega)$, then
  $\| \mathcal L^{(n)}_{\om} v \| > e^{(\lambda_\ell-\ep)n} d(v,V_{l+1}(\om))$
  for large $n$; 
\item \label{it:unifliminf} For all $a>0$ and $\epsilon>0$, there exists $n_0$ so that
for all $n\ge n_0$ and all $x\in S_X$ such that $d(x,V_{\ell+1}(\om))\ge a$, one has
$\|\LL_\om^{(n)}x\|\ge e^{(\lambda_\ell-\ep)n}$.
    \setcounter{saveenum}{\value{enumi}}
  \end{enumerate}
  The remaining steps are proved by induction on $\ell$.
  \begin{enumerate}[(a)]
    \setcounter{enumi}{\value{saveenum}}
  \item \label{it:limsuprate} If $x\in V_\ell(\omega)$, then
    $\limsup\frac 1n\log\|\mathcal L^{(n)}_\omega x\|\le
    \lambda_\ell$;
  \item \label{it:mble}$\omega\mapsto V_\ell(\omega)$ is measurable;
  \item \label{it:inducexpts} The restriction, $\mathcal R_\ell$, of
    $\mathcal R$ to $V_\ell(\omega)$ has the same exponents as
    $\mathcal R$ with the initial $\D_{\ell-1}$ exponents removed.
  \end{enumerate}

\paragraph{\emph{Proof of} \eqref{it:nrate}}
Note that by construction
$$
\det\left( (\theta^{(n)}_i(\mathcal L^{(n)}_\omega x^{(n)}_j))_{1\le
    i,j\le \D_{\ell-1}} \right) \ge
KE_{\D_{\ell-1}}(\mathcal L_\omega^{(n)}),
$$
where $K$ is a constant depending only on $\D_{\ell-1}$ arising from Lemmas
\ref{lem:Frec} and \ref{lem:genseqs}.
For an arbitrary $x\in V_\ell^{(n)}\cap S_X$, let $\phi \in S_{X^*}$
be such that $\phi(\mathcal L^{(n)}_\omega x)=\|\mathcal
L^{(n)}_\omega x\|$.  Then, adding a column for $x$ and a row for
$\phi$ to the matrix $U((\theta_i^{(n)}),(x_j^{(n)}))_{1\le i,j\le
  \D_{\ell-1}}$, we see that the $x$ column has all 0 entries except
for the $1+\D_{\ell-1}$-st (by definition of $V^{(n)}_\ell(\omega)$),
and so we arrive at the bound (uniform over $x\in S_X \cap
V_\ell^{(n)})$,
\begin{equation}\label{eq:unifexprate}
 KE_{\D_{\ell-1}}(\mathcal L^{(n)}_\omega)
  \|\mathcal L^{(n)}_\omega x\|
  \le E_{1+\D_{\ell-1}}(\mathcal L^{(n)}_\omega).
\end{equation}
The conclusion follows from Lemma~\ref{lem:expDefn}.

\paragraph{\emph{Proof of} \eqref{it:Cauchy}}
Let us assume that $n_0$ is chosen large enough that for all $n\ge
n_0$, the following conditions are satisfied: $\|\mathcal L_\omega\|$,
$\|\mathcal L_{\sigma^n\omega}\|$ are less than $e^{\epsilon n}$;
$\|\mathcal L^{(n)}_\omega x\|\leq e^{(\lambda_\ell+\epsilon)n}\|x\|$
for all $x\in V_\ell^{(n)}$; and $\|\mathcal L_\om^{(n)}x\|\geq
e^{(\lambda_{\ell-1}-\epsilon)n}\|x\|$ for all $x\in Y_{\ell-1}^{(n)}(\om)$
(using integrability of $\log\|\LL_\om\|$; \eqref{it:nrate}; and Lemma \ref{lem:genseqs}).
Let $n\ge n_0$.  Let $x\in
V_\ell^{(n)}(\om)\cap S_X$ and write $x=u+w$ where $u\in
V_\ell^{(n+1)}(\om)$ and $w\in Y^{(n+1)}_{\ell-1}(\om)$.  Now we have
$$
\|\mathcal L_\om^{(n+1)}x\|\le e^{(\lambda_\ell+\epsilon)n} \|\mathcal
L_{\sigma^n\omega}\| \le e^{(\lambda_\ell+2\epsilon)n}.
$$
We also have $\|u\|\le 1+\|w\|$, $\|\mathcal L^{(n+1)}_\omega w\|\ge
e^{(\lambda_{\ell-1}-\epsilon)(n+1)}\|w\|$ and $\|\mathcal
L^{(n+1)}_\omega u\|\le e^{(\lambda_\ell+\epsilon)(n+1)}(1+\|w\|)$.
Manipulation with the triangle inequality yields
\begin{equation}\label{eq:wbound}
  \|w\|\le e^{-n(\lambda_{\ell-1}-\lambda_\ell-4\epsilon)}.
\end{equation}
Hence, each point in the unit sphere of $V_\ell^{(n)}(\omega)$ is
exponentially close to $V_\ell^{(n+1)}(\omega)$. Since the two spaces
have the same codimension, one obtains a similar inequality in the
opposite direction by Lemma \ref{lem:Grass}\eqref{it:symmclose}.  
This establishes that $V_\ell^{(n)}(\omega)$ is a Cauchy sequence.

\paragraph{\emph{Proof of} \eqref{it:equivar}}
We argue essentially as in \eqref{it:Cauchy}. For
large $n$, we take $v\in V^{(n+1)}_\ell(\omega)\cap S_X$. We write
$\mathcal L_\omega(v)$ as $u+w$ with $u\in
V^{(n)}_\ell(\sigma(\omega))$ and $w\in
Y^{(n)}_{\ell-1}(\sigma(\omega))$.  We have
bounds of the form $\|\mathcal L^{(n+1)}_\omega v\|\lesssim
e^{\lambda_\ell n}$; $\|u\|\lesssim 1+\|w\|$, $\|\mathcal
L^{(n)}_{\sigma(\omega)}u\|\lesssim e^{\lambda_\ell n}(1+\|w\|)$ and
$\|\mathcal L^{(n)}_{\sigma(\omega)}w\|\gtrsim
e^{\lambda_{\ell-1}n}\|w\|$ (here $\lesssim$ means `is smaller up to
sub-exponential factors').  Combining the inequalities as before, one
obtains a bound $\|w\|\lesssim e^{-(\lambda_{\ell-1}-\lambda_\ell)n}$.
Taking a limit, we obtain $\mathcal L_\omega V_\ell(\omega)\subset
V_\ell(\sigma(\omega))$ as required.

\paragraph{\emph{Proof of} \eqref{it:liminfrate}}
Let $x\not\in V_{\ell+1}(\omega)$, with $\|x\|=1$.
For large $n$, if $x$ is written as
$u_n+v_n$ with $u_n\in V^{(n)}_{\ell+1}(\omega)$ and
$v_n\in Y^{(n)}_\ell(\om)$, then $\|v_n\|\ge
\frac 12d(x,V_{\ell+1}(\om))$ and $\|u_n\|\le 1+\|v_n\|$. By \eqref{eq:unifexprate},
$\|\mathcal L^{(n)}_\omega u_n\|\le
e^{(\lambda_{\ell+1}+\epsilon)n}(1+\|v_n\|)$ for large $n$, and
Lemma \ref{lem:genseqs} 
gives $\|\mathcal L^{(n)}_\omega v_n\|\ge 4^{-\D_{\ell}}
F_{\D_{\ell}}(\LL_\om^{(n)})\|v_n\|\ge 
e^{(\lambda_{\ell}-\epsilon)n}\|v_n\|$ for large $n$. The
conclusion follows. The proof of \eqref{it:unifliminf} is the same,
using the uniformity in Lemma \ref{lem:genseqs}.

For the inductive part, notice that the case $\ell=1$ is trivial.
Let $\ell\ge 2$ and suppose that the claims have been established for the 
case $\ell-1$. We know from \eqref{it:nrate} that elements of 
$V_\ell^{(n)}(\om)$ expand at exponential rate approximately $\lambda_\ell$
under $\LL_\om^{(n)}$. We need to show that the analogous statement holds for the 
limiting subspace $V_\ell(\om)$. We mimic the start of the proof
to control the slow $m_{\ell-1}$-codimensional subspace of $V_{\ell-1}(\om)$.
This will be exactly $V_\ell(\om)$. 

\paragraph{\emph{Proof of} \eqref{it:limsuprate}}
By the inductive hypothesis, the top Lyapunov exponent of $\LL_\om^{(n)}$ 
applied to the bundle $\{V_{\ell-1}(\om):\om \in \Om\}$ is $\lambda_{\ell-1}$ with multiplicity
$m_{\ell-1}$, with the following Lyapunov exponent being $\lambda_\ell$. 
Let $(z^{(n)}_j)_{j=1}^{m_{\ell-1}}\in 
S_X\cap V_{\ell-1}(\om)$ and $(\psi^{(n)}_j)_{j=1}^{m_{\ell-1}}$ be as guaranteed
by Lemma \ref{lem:genseqs} and let ${V_\ell'}^{(n)}(\om)=
V_{\ell-1}(\om)\cap \lin(\psi_1^{(n)},\ldots,\psi_{m_{\ell-1}}^{(n)})^\perp$. 
The same argument as in \eqref{it:nrate} shows that for arbitrary $\epsilon>0$
and sufficiently large $n$,
$\|\LL_\om^{(n)}x\|\le e^{(\lambda_\ell+\ep)n}\|x\|$ for $x\in {V_\ell'}^{(n)}(\om)$. 
The argument used in \eqref{it:Cauchy} also works, showing that 
$V_\ell'^{(n)}(\om)$ converges to a space $V'_\ell(\om)\subset V_{\ell-1}(\om)$ and,
crucially, we obtain an analogue of \eqref{eq:wbound}:
for all sufficiently large $n$, if $x\in V'_\ell(\om)$, then $x$ may be
expressed as $u+w$
with $u\in {V'_\ell}^{(n)}(\om)$ and $w\in V_{\ell-1}(\om)$ satisfying
$\|w\|\le e^{-n(\lambda_{\ell-1}-\lambda_\ell-4\epsilon)}$. 
Then, $\|\mathcal L^{(n)}_\omega u\|\lesssim e^{\lambda_\ell n}$ by 
the above;
and $\|\mathcal L^{(n)}_\omega w\|\lesssim e^{-(\lambda_{\ell-1}-\lambda_\ell)n}
\cdot e^{\lambda_{\ell-1}n}$. So $\|\mathcal L^{(n)}_\om x\|\lesssim e^{\lambda_\ell n}$
by the triangle inequality.  From \eqref{it:liminfrate},
we deduce $V'_\ell(\omega) \subseteq
V_\ell(\omega)$.  Since, by \eqref{it:inducexpts} (applied to
 $\mathcal R_{\ell-1}$), $V_\ell(\omega)$
and $V'_\ell(\omega)$ have the same finite co-dimension as subspaces of
$V_{\ell-1}(\omega)$, $V_\ell(\omega)= V'_\ell(\omega)$ and
\eqref{it:limsuprate} follows.

\paragraph{\emph{Proof of} \eqref{it:mble}}
From \eqref{it:limsuprate} and \eqref{it:liminfrate}, we see that
$V_\ell(\omega)=\{v\colon \limsup \frac 1n\log \|\mathcal
L^{(n)}_\omega v\|\le \lambda_\ell\}$ and the assumptions of Lemma \ref{lem:grassmeas} hold. Measurability of $V_\ell(\omega)$ follows.

\paragraph{\emph{Proof of} \eqref{it:inducexpts}}
Let $W(\omega)=\{w\in V_{\ell-1}(\omega)\colon 
d(w,V_\ell(\omega))\geq \frac 12\|w\|\}$.  Let $\epsilon>0$. We claim that
for sufficiently large $n$,
\begin{equation}\label{eq:minang}
  d(w',v')>e^{-\epsilon n}\text{ for all 
    $w'\in S_X\cap \mathcal L^{(n)}_\omega W(\omega)$
    and $v'\in V_\ell(\sigma^n\omega)$.}
\end{equation}

Let $\delta < \frac{\ep (\lambda_{\ell-1}-\lambda_\ell)}
{4(\lambda_{\ell-1}-\lambda_\ell+\ep)}$.  
Let $g_\ell(\om)=\sup_{p\in \N}e^{-p(\lambda_\ell+\delta)} 
\left\|\mathcal L^{(p)}_{\omega}|_{V_{\ell}(\om)}\right\|$. 
This is measurable by Lemma \ref{lem:volmeas}.
Notice that $\log^+
g_\ell(\om) \leq \log^+ \|\LL_\om \| +\max(-\lam_\ell -\del,0) +
\log^+ g_\ell(\sig\om)$.  By Lemma \ref{lem:tempered}, 
$\lim_{n\to\infty}\frac 1n \log^+ g_\ell(\sig^{n}\om)=0$.

Then, there exists $n_0(\omega)$ such that for $p,n\ge n_0$, one has
\begin{equation}\label{eq:growthbounds}
  \begin{split}
    \|\mathcal L^{(p)}_{\sigma^n\omega}z\|& \le \tfrac13 \exp(n\tfrac
    \epsilon 2 + p(\lambda_{\ell-1}+\delta))\|z\|\text{ for all $z\in
      V_{\ell-1}(\sig^n\om)$;}
    \\
    \|\mathcal L^{(p)}_{\sigma^n\omega}v\|& \le \tfrac13 \exp(n\tfrac
    \epsilon 2 + p(\lambda_{\ell}+\delta))\|v\| \text{ for all $v\in
      V_\ell(\sigma^n\omega)$}.
  \end{split}
\end{equation}
Additionally by \eqref{it:unifliminf}, $n_0$ may be
chosen so that
\begin{equation}\label{eq:growthbounds2}
  e^{n(\lambda_{\ell-1}-\delta)}<\|\mathcal L^{(n)}_\omega w\|
  <e^{n(\lambda_{\ell-1}+\delta)}\text{ for all $w\in W(\omega)\cap S_X$
   and $n\ge n_0$.}
\end{equation}

We will show \eqref{eq:minang} by contradiction.  Suppose $d(\mathcal
L^{(n)}_\omega w,V_\ell(\sigma^n\omega)) <e^{-\epsilon n}\|\mathcal
L^{(n)}_\omega w\|$ for some $n>n_0$, and such that $\epsilon
n/(\lambda_{\ell-1}-\lambda_\ell)>n_0$.  Write $\mathcal
L^{(n)}_\omega w=v+z$, with $v\in V_\ell(\sig^n\omega)$ and
$\|z\|<e^{-\epsilon n}\|v\|$. Now $\mathcal L^{(p+n)}_\omega w
=\mathcal L^{(p)}_{\sigma^n\omega}v + \mathcal
L^{(p)}_{\sigma^n\omega}z$.  Taking $p=\epsilon
n/(\lambda_{\ell-1}-\lambda_\ell)$, the bounds on the two terms coming
from \eqref{eq:growthbounds} agree, giving
$$
\|\mathcal L^{(n+p)}_\omega w\|\le e^{-\epsilon
  n/2}e^{(p+n)(\lambda_{\ell-1}+\delta)}.
$$
One checks, however, that by the choice of $\delta$, this is smaller
than $e^{(p+n)(\lambda_{\ell-1}-\delta)}$, contradicting
\eqref{eq:growthbounds2}. This establishes claim \eqref{eq:minang}.
Notice that combining \eqref{eq:minang} and \eqref{eq:growthbounds2},
we see that the restriction of the random dynamical system to the
equivariant family
$Q_{\ell-1}(\omega)=V_{\ell-1}(\omega)/V_\ell(\omega)$ satisfies for
all sufficiently large $n$, 
\begin{equation}\label{eq:quotexp}
  \|\bar{\mathcal L}^{(n)}_\omega \bar w\|_{Q_{\ell-1}(\sig^n\omega)}
  \ge e^{(\lambda_{\ell-1}-\epsilon)n}\|\bar w\|_{Q_{\ell-1}(\omega)}
  \text{ for all $\bar w\in Q_{\ell-1}(\omega)$},
\end{equation}
where $\bar{\mathcal L}_\omega$ denotes the induced action of
${\mathcal L}_\omega$ on $Q_{\ell-1}(\omega)$.

Set $\mult=\mult_{\ell-1}$. To complete the proof of
\eqref{it:inducexpts}, let $n>n_0$ be arbitrary; let
$v_1,\ldots,v_{k-\mult}$ be unit vectors in $V_\ell(\omega)$ and
$w_1,\ldots,w_{\mult}$ be unit vectors in $V_{\ell-1}(\omega)$. Then
\begin{align*}
  &d_k\mathcal L^{(n)}_\omega(v_1,\ldots,v_{k-\mult},w_1,\ldots,w_{\mult})\\
  &\ge d_{k-\mult}\mathcal L^{(n)}_\omega(v_1,\ldots,v_{k-\mult})
  d_{\mult}\bar{\mathcal L}^{(n)}_\omega(\bar w_1,\ldots,\bar
  w_{k-\mult}),
\end{align*}
where $\bar w_i$ is $w_i+V_\ell(\omega)$.  We therefore see
$$
D_k\mathcal L_\omega^{(n)}|_{V_{\ell-1}(\omega)} \ge
D_{k-\mult}\mathcal L_\omega^{(n)}|_{V_\ell(\omega)} \cdot D_\mult
\bar{\mathcal L}_\omega^{(n)}.
$$
By \eqref{eq:quotexp} and Lemma \ref{lem:unifexpDk}, $D_\mult \bar{\mathcal L}_\omega^{(n)}\gtrsim
e^{\lambda_{\ell-1}mn}$.

This gives a matching upper bound for $D_{k-\mult}\mathcal
L_\omega^{(n)}|_{V_\ell(\omega)}$ to the lower bound that we obtained
in Lemma \ref{lem:ExponentsOfReducedCocycle}.  Hence we deduce the
first $k$ exponents of $\mathcal R_{\ell-1}$ are
${\mult}={\mult}_{\ell-1}$ repetitions of $\lambda_{\ell-1}$ followed
by the first $k-{\mult}$ exponents of $\mathcal R_\ell$, establishing
\eqref{it:inducexpts}.
\end{proof}

The next corollary provides a \textsl{splitting} when the base $\sig$
is invertible and the Banach space is reflexive. Note that the methods of
\cite{GTQuas} obtain the same conclusion under the weaker assumption that 
$X^*$ has separable dual. We include this new proof, as we find it to be
illuminating.


\begin{cor}[Multiplicative Ergodic Theorem: The Oseledets splitting]
\label{cor:splitting}
Let $\mathcal R$ be a random dynamical system acting on a reflexive separable
Banach space. Suppose that the base, $\sig$, is invertible; and that
$\kappa(\mathcal R)<\lambda(\mathcal R)$.
Then there exist $1\le r\le\infty$ and exceptional Lyapunov exponents
and multiplicities as in Theorem \ref{thm:Osel}. Furthermore, there is
a measurable direct sum decomposition
\footnote{In the case $r=\infty$, the decomposition is
$X=\bigoplus_{i=1}^\infty Z_i(\om)\oplus V_\infty(\om)$}
\begin{equation*}
X=Z_1(\omega)\oplus \dots \oplus Z_r(\omega)\oplus V_\infty(\omega),
\end{equation*}
such that for $\PP$-a.e.\ $\omega$, $\LL_\om(Z_i(\om))=Z_i(\sig(\om))$ for
each $i$, $\LL_\om(V_\infty(\om))\subset V_\infty(\sig(\om))$, 
 $\dim Z_i(\omega)=m_i$ and
$\lim_{n\to\infty}
\frac1n\log\|\LL^{(n)}_\omega v\|=\lambda_i$ for 
$v\in Z_i(\omega)\setminus\{0\}$; $\limsup_{n\to\infty} \frac1n\log\|\LL^{(n)}_\omega v\|\le \kappa(\mathcal R)$ for $v\in V_\infty(\omega)$.
\end{cor}

We make use of
the following facts valid for reflexive Banach spaces.  If $X$ is
reflexive and $\Theta$ is a closed subspace of $X^*$ of codimension
$k$, then its annihilator, $\Theta^\perp$ is $k$-dimensional. Further
if $\theta$ is a bounded functional such that
$\theta|_{\Theta^\perp}=0$, then $\theta\in\Theta$.

\begin{proof}
  Let $\mc{R}^*$ be the dual random dynamical system to $\mc{R}$ as
  defined above.  Applying Theorem~\ref{thm:Osel} to $\mc{R}^*$, and
  recalling from Theorem~\ref{thm:summ} that the Lyapunov exponents
  and multiplicities of $\mc{R}$ and $\mc{R}^*$ coincide, yields a
  $\sig^{-1}$ equivariant measurable filtration $X^*=V^*_1(\om)\supset
  \dots \supset V^*_r(\omega)\supset V^*_\infty(\om)$, with the same
  codimensions as those of $\mc{R}$.

  Let $Y_\ell(\omega)={V_\ell^*(\omega)}^\perp$. Notice that $\dim
  Y_\ell(\om)=\D_{\ell-1}$.  Since $V^*_{\ell}(\om)$ is measurable and
  $(\cdot) ^ \perp : \mc{G}(X^*) \to \mc{G}(X)$ is continuous \cite[IV
  \S2]{Kato}, then $Y_\ell(\om)$ is measurable.  Also, for every $\psi
  \in V^*_{\ell}(\sig\om)$, we have $\mathcal L_{\sig\om}^*\psi \in
  V^*_{\ell}(\om)$ by equivariance of $V^*_{\ell}(\cdot)$.  Hence, for
  every $y \in Y_\ell(\om)$, $0=\mathcal L_{\sig\om}^* \psi(y)=\psi
  (\mathcal L_{\om} y)$. Thus, $\mathcal L_{\om} Y_\ell(\om) \subset
  Y_\ell(\sig\om)$, yielding equivariance.

  
We define $Z_\ell(\omega)=Y_{\ell+1}(\omega)\cap V_\ell(\omega)$. It remains to show that
$V_{\ell-1}(\omega)=V_\ell(\omega)\oplus Z_{\ell-1}(\omega)$. To prove this, it suffices
to show that $V_\ell(\omega)\oplus Y_\ell(\omega)=X$. Suppose this is not the case. Then,
there exists $v\in V_\ell(\omega)\cap Y_\ell(\omega)\cap S_X$. Let $\theta\in S_{X^*}$ be
such that $\theta(v)=1$. Let $\bar\theta$ be the equivalence class of $\theta$
in $Q^*(\omega)=X^*/V_\ell^*(\omega)$. 

We record a corollary of (12). 
For almost every $\omega$, one has for all large $n$
$$
\|\bar{\mathcal L}^{*(n)}_{\omega}\bar\psi\|_{Q^*(\sigma^{-n}\omega)}\ge 
e^{-(\lambda_{\ell-1}-\epsilon)n}\|\bar\psi\|_{Q^*(\omega)}
\text{ for all $\bar\psi\in Q^*(\omega)$}.
$$
Since $Q^*(\omega)$ is a finite-dimensional space whose dimension does 
not depend on $\omega$, the above implies that 
$\bar{\mathcal L}^*_\omega$ is bijective. Furthermore, the quantity
$$
C(\omega)=\inf_{n\in\mathbb N; \bar\psi\in S_{X^*}\cap Q^*(\omega)}
e^{-(\lambda_{\ell-1}-\epsilon)n}\|\bar{\mathcal L}_\omega^{*(n)}\bar\psi\|
_{Q^*(\sigma^{-n}\omega)}
$$
is positive. We claim that $C(\omega)$ is measurable. 
Let $(\zeta_n(\omega))$ be a measurable
dense subsequence of $V_\ell^*(\omega)$. If $\bar\psi(\omega)$ is the 
equivalence class of $\psi$ in $Q^*(\omega)$, then we have
$\|\bar\psi(\omega)\|_{Q^*(\omega)}=\inf_k \|\psi(\omega)-\zeta_k(\omega)\|$, which
depends measurably on $\omega$. Proceeding as in Lemma \ref{lem:volmeas},
we see that $C(\omega)$ is measurable and by \eqref{eq:quotexp}
is positive almost everywhere. Hence $C(\omega)$ exceeds some quantity $c$ on a set of
positive measure. 

Let $\bar\phi_n\in Q^*(\sigma^n\omega)$ be such that $\bar{\mathcal L}_{\sigma^n\omega}
^{*(n)}\bar\phi_n=\bar\theta$. Then
$$
\|\bar{\mathcal L}^{*(n)}_{\sigma^n\omega}\bar\phi_n\|_{Q^*(\omega)}\ge
C(\sigma^n\omega)e^{(\lambda_{\ell-1}-\epsilon)n}\|\bar\phi_n\|_{Q^*(\sigma^n\omega)}.
$$
By ergodicity, there exist arbitrarily large values of $n$ for which
\begin{equation}\label{eq:last}
\|\bar\phi_n\|_{Q^*(\sigma^n\omega)}
\le c^{-1}e^{-(\lambda_{\ell-1}-\epsilon)n}\|\bar\theta\|_{Q^*(\omega)}. 
\end{equation}
On the other hand, 
one has $v\in Y_l(\omega)$, so that $\psi(v)=0$ for every
$\psi\in V_l^*(\omega)$. 
Thus, if we express $\mathcal L_{\sigma^n\omega}^{*(n)} \phi_n + \psi_n =\theta$, where
$\phi_n\in X^*$ is a representative of $\bar\phi_n$ and
$\psi_n \in V_l^*(\omega)$, the following holds
$$
1=\theta(v)= \mathcal L_{\sigma^n\omega}^{*(n)} \phi_n(v) + \psi_n(v)=
\phi_n(\mathcal L^{(n)}_\omega v).$$
In addition, for every $\psi \in V_l^*(\sigma^{-n}\omega)$, 
$(\phi_n+\psi)(\mathcal L^{(n)}_\omega v)= \phi_n(\mathcal L^{(n)}_\omega v)=1$.
Thus, for sufficiently large $n$ and every $\psi \in V_l^*(\sigma^{-n}\omega)$,  $\|\phi_n+\psi\| e^{(\lambda_\ell+\epsilon)n}\ge 1.$
Therefore, $\|\bar\phi_n\|_{Q^*(\sigma^n(\omega))}\ge e^{-(\lambda_\ell+\epsilon)n}$, giving 
a contradiction with \eqref{eq:last}. Hence, $V_{\ell-1}(\omega)=V_\ell(\omega)\oplus Z_{\ell-1}(\omega)$ as
required. 

\end{proof}

\subsubsection*{Acknowledgments}
CGT acknowledges support from Australian Research Council Discovery
Project DP110100068 at UNSW. AQ acknowledges support from the Canadian NSERC,
and thanks the Universidade de S\~ao Paulo for the invitation to
deliver a mini-course from which this work originated.

The authors would like to thank the referees for a very careful reading and
helpful suggestions.
\bibliographystyle{abbrv}

\end{document}